\documentclass[letterpaper,12pt,oneside]{article}

\usepackage[T1]{fontenc}
\usepackage[latin1]{inputenc}
\usepackage{epsfig}
\usepackage{amsmath}
\usepackage{amssymb}
\usepackage{mathtools}
\usepackage{amsthm}
\usepackage[active]{srcltx}
\usepackage{dsfont}
\usepackage[titles]{tocloft}
\usepackage{color}
\topmargin -0.2cm \oddsidemargin -0.1cm \textwidth  16.4cm
\headheight 0.0cm \textheight 21.9cm
\parindent  5mm
\parskip    10pt
\tolerance  1000

\setlength{\evensidemargin}{0in} \setlength{\oddsidemargin}{0in}
\setlength{\topmargin}{0in} \textheight 23cm \textwidth 16cm

\numberwithin{equation}{section}
\newtheorem{theoreme}{Theorem}[section]
\newtheorem{proposition}{Proposition}[section]
\newtheorem{remarque}[theoreme]{Remark}

\newtheorem{lemme}{Lemma}[section]
\newtheorem{ass}[theoreme]{Assumption}

\newtheorem{corollaire}[theoreme]{Corollary}
\newtheorem{definition}[theoreme]{Definition}
\newtheorem{notation}[theoreme]{Notation}
\definecolor{darkblue}{rgb}{0,0,0.7} 

\newcommand{\RR}{\ensuremath{\mathbb R}}

\newcommand{\PP}{\ensuremath{\mathbb P}}

\newcommand{\EE}{\ensuremath{\mathbb E}}
\newcommand{\NN}{\ensuremath{\mathbb N}}

\newcommand{\A}{\ensuremath{\mathcal A}}
\newcommand{\V}{\ensuremath{\mathcal V}}
\newcommand{\U}{\ensuremath{\mathcal U}}
\newcommand{\B}{\ensuremath{\mathcal B}}

\newcommand{\J}{\ensuremath{\mathcal J}}
\newcommand{\F}{\ensuremath{\mathcal F}}

\newcommand{\ind}{\ensuremath{\mathds 1}}
\newcommand{\G}{\ensuremath{\mathcal G}}

\newcommand{\xx}{\ensuremath{\mathbf{x}}}

\newcommand{\XX}{\ensuremath{\mathbf{X}}}

\newcommand{\om}{\omega}
\newcommand{\Om}{\Omega}
\newcommand{\De}{\Delta}
\newcommand{\la}{\lambda}

\newcommand{\de}{\delta}
\newcommand{\be}{\beta}
\newcommand{\ep}{\varepsilon}
\newcommand{\al}{\alpha}
\newcommand{\si}{\sigma}
\newcommand{\ga}{\gamma}



\newcommand{\VV}{\ensuremath{\mathbf{V}}}
\newcommand{\WW}{\ensuremath{\mathbf{W}}}

\newcommand{\vvv}{\ensuremath{\mathbf{v}}}
\newcommand{\uuu}{\ensuremath{\mathbf{u}}}

\newcommand{\limn}{\lim_{n\to\infty}}

\newcommand{\vs}{\vspace{-0.25cm}}

\title{Differential games with asymmetric \\ and correlated information}
\author{\textsc{Miquel Oliu-Barton}\\[0.25cm]
\small Institut de Math\'ematiques\\
\small Universit\'e de Neuch\^atel \\
\small  Rue Emilie-Argand 11, 2000 Neuch\^atel, Switzerland\\
\small miquel.oliu.barton@normalesup.org}
\date{Revised Version, January 2014}

\begin{document}
\maketitle \vs \vs \vs \vs \vs

\vspace{1cm}
\begin{small} 
\textbf{Abstract}. Differential games with asymmetric information were introduced by Cardaliaguet (2007). As in repeated games with lack of information on both sides (Aumann and Maschler (1995)), each player
receives a private signal (his type) before the game starts
and has a prior belief about his opponent's type. Then, a differential game is played in which
the dynamic and the payoff function depend on both types: each player is thus partially
informed about the differential game that is played.
The existence of the value function and some characterizations have been obtained under the assumption
that the signals are drawn independently.
In this paper, we drop this assumption and extend these two results to the general case of correlated types.
This result is then applied to repeated games with incomplete information: the characterization of the asymptotic value obtained by
Rosenberg and Sorin (2001) and Laraki (2001) for the independent case is extended to the general case.

\medskip
\textbf{Key words}: Differential games; Fenchel duality; Incomplete information; Comparison principle; Value function.

\textbf{MSC2000 subject classification:} 49N30, 49N70, 91A05, 91A23, 93C41

\end{small}
\vspace{1cm}

\section{Introduction}\label{diffinco}
Differential games with incomplete information were introduced by Cardaliaguet \cite{carda07}, combining the aspects of classical differential games with the informational issues introduced by Aumann and Maschler \cite{AM95} in the study of repeated games. More precisely, the game consists in a dynamic
$$\dot{x}(t)=f(t,x(t),u(t),v(t)), \ t\in [t_0,T], \ x(t_0)=x_0,$$
where $u$ (resp. $v$) is the control played by the first (resp. second) player, and a family of payoff functions
$$ \J^{k\ell}(t_0,x_0)=\int_{t_0}^T \ga^{k\ell}(t,x(t),u(t),v(t))dt+g^{k\ell}(x(T)), \ (k,\ell)\in K\times L,$$
where $K$ and $L$ are two finite sets.
 Before the game starts, the type $k$ (resp. $\ell$) of player $1$ (resp. $2$)  is drawn according to a probability distribution $p$ (resp. $q$) over $K$ (resp. $L$). Each player knows (or is informed of) his type only. 
 Player $1$ (resp. $2$) aims at maximizing (resp. minimizing) the payoff. The main result of \cite{carda07} is that, under standard regularity assumptions of the dynamic and the payoff functions and under Isaacs' condition, the game has a value. Moreover, as a function of the initial time $t_0$, the initial position $x_0$ and of $p$ and $q$, the value is characterized as the unique dual solution of some Hamilton-Jacobi-Isaacs. The notion of dual solution is the following: the function $V(t_0,x_0,\, \cdot \,,\, \cdot\,)$ is concave-convex and Lipschitz continuous with respect to $(p,q)$, the concave conjugate of the value function with respect to $p$ is a sub-solution of the Hamilton-Jacobi-Isaacs equation, while the convex conjugate of the value function with respect to $q$ is a super-solution of the same equation.



The present paper is concerned with the same problem, dropping the (important) assumption that the players' types 
are drawn independently.
Considering correlated types is referred to as the \textit{dependent case} in the literature of repeated games, as opposed to the \textit{independent case}:  the pair of types is drawn according to some probability distribution over $K\times L$, which is not necessarily a direct product $p\otimes q$. Using the decomposition of a probability measure over the product set $K	\times L$, as the direct product of a marginal probability 
and a matrix of conditional probabilities, one obtains analogue results following the main lines of \cite{carda07}. These two components replace the couple of independent probabilities in the formulation of a new system of Hamilton-Jacobi-Isaacs equations that characterizes the value of the game. Our result is then applied to repeated games with incomplete information, leading to a characterization of the asymptotic value in the spirit of
Rosenberg and Sorin (2001) and Laraki (2001) (a pair of inequalities written only on the extreme points of the graph of the value function).

To underline the relevance of the dependent case we refer to Sorin and Zamir \cite{SZ85}: ''the dependent case is not only conceivable but seems to be rather the typical case : whenever the states of the world or the \textit{types} in question contain also the beliefs, as is typically the situation, the dependent case seems unavoidable''. However, our paper is the first in the literature on differential games with asymmetric information \cite{carda07,carda09b,carda09a,CR09, CR09b, CS10, souquiere10b, souquiere10a} to treat the dependent case.

The paper is organized as follows. In Section \ref{diffginco},
we introduce differential games with asymmetric information and define the sets of strategies (deterministic
and random). Then, we shortly describe some concepts from the theory of P.D.E's and from convex analysis.
Section \ref{sec_mainresults} is devoted to our main results.

\section{Differential games with asymmetric information}\label{diffginco}
A (standard, zero-sum) differential game is described by an initial state, a dynamic, a running payoff and a terminal payoff.
  A differential game with asymmetric information $\G(t_0,x_0,\pi)$
is described by two finite sets $K$ and $L$, a probability distribution $\pi\in \De(K\times L)$\footnote{For any finite set $X$, $\De(X):=\{ a:X\to [0,1],  \, \sum_{x\in X}a(x)=1\}$
denotes the set of probability distributions over $X$.
}, an initial time $t_0\in [0,1]$  and a family of differential games  indexed by $(k,\ell)\in K\times L$, with initial positions $x_0^{k\ell}$.
It is played as follows:
\begin{itemize}
\item First,  
a pair of parameters (or types) $(k,\ell)\in K\times L$ is drawn according to $\pi$: player $1$ is informed only about $k$, player $2$ only about $\ell$.
\item Second, the differential game $(x_0^{k\ell},f^{k\ell},\ga^{k\ell},g^{k\ell})$ 
is played on $[t_0,1]$, where
\begin{itemize}
\item $x^{k\ell}_0\in \RR^n$ is the initial state,
\item $f^{k\ell}:[0,1]\times \RR^n\times U \times V\to \RR^n$ is a dynamic,
\item $\ga^{k\ell}:[0,1]\times \RR^n\times U \times V\to \RR$, is a running payoff function, and
\item $g^{k\ell}:\RR^n\to \RR$ is a  terminal payoff function.
\end{itemize}
\end{itemize}
A crucial aspect of this model is the fact that, unlike standard differential games, none of the players knows
the true state of the world, i.e. each player is partially informed about the game that is being played.

A \emph{control} for player $1$ (resp. $2$) is a Lebesgue-measurable mapping from $[t_0,1]$ to $U$ (resp. $V$). Elements of $U$ and $V$ are identified with constant controls. The set of controls are denoted by $\U(t_0)$ and $\V(t_0)$ respectively.

The following assumptions on $f,\ga$ and $g$ are standard.
\begin{ass}\label{ass1JD}  For each $(k,\ell)\in K\times L$,
 \begin{itemize}
\item[] \vspace{-1.2cm}
\item[$(a)$] $f^{k\ell}$ and $\ga^{k\ell}$ are uniformly bounded,  uniformly Lipschitz in $(t,x)$ and continuous.
\item[$(b)$]  $g^{k\ell}$ is Lipschitz continuous and bounded.
\end{itemize}
\end{ass}
Assumption \ref{ass1JD} ensures that, for any pair of controls
$(\uuu,\vvv) \in \mathcal{U}(t_0)\times \mathcal{V}(t_0)$,  the following O.D.E. has a unique solution, modulo equality a.e.\footnote{a.e. is an abbreviation for almost everywhere.}
\begin{equation*} 
\begin{cases}
    \dot{\xx}^{k\ell}(t) =  f^{k\ell}(t,\xx(t),\uuu(t),\vvv(t)), \text{ a.e. on }  [t_0,1] \\
    \xx^{k\ell}(t_0) = x^{k\ell}_0.
    \end{cases}
\end{equation*}
Its solution is the \emph{trajectory} induced by the couple $(\uuu,\vvv)$. It belongs to
 $\mathcal{C}^1([t_0,1];\RR^n)$ and is denoted by $\xx^{k\ell}[t_0,x^{k\ell}_0,\uuu,\vvv]$.
The next assumption corresponds to perfect monitoring (i.e. observation of the past actions) in repeated games.

\begin{ass}\label{ass2JD} The players observe the past controls, i.e. at time $t\in [t_0,1]$,
$$(\uuu(s),\vvv(s))_{s\in [t_0,t]}$$
 is commonly known by the players.
\end{ass}

Both players know the description of the game. Player $1$ (resp. $2$) aims at maximizing (resp. minimizing) the following payoff functional:
$$\J^{k\ell}(t_0,x_0,\uuu,\vvv):=\int_{t_0}^1 \ga^{k\ell}(s,\xx^{k\ell}[t_0,x^{k\ell}_0,\uuu,\vvv](s),\uuu(s),\vvv(s)ds+g^{k\ell}\big(\xx^{k\ell}[t_0,x^{k\ell}_0,\uuu,\vvv](1)\big). $$


\begin{remarque} Standard differential games correspond to the case where both $K$ and $L$ are singletons, i.e. $|K|=|L|=1$.
Differential games with incomplete information on one side correspond to the case where either $|K|=1$ or $|L|=1$. The case where $\pi$ is a product measure  (i.e. there exist  $p\in \De(K)$ and $q\in \De(L)$ such that
$\pi=p\otimes q$) is known as the \textit{independent case}.
 \end{remarque}
 \subsection{Reduction}\label{reduc}
 Without loss of generality, the following simplification of the model is assumed:
\begin{itemize}
 \item[$(a)$] There is no running payoff, i.e. $\ga^{k\ell}\equiv 0$ for all $(k,\ell)\in K\times L$;
 \item[$(b)$] The dynamic and the initial position do not depend on the types, i.e. only the terminal payoff function is type dependent.
\end{itemize}
The differential game with asymmetric information $\G(t_0,x_0,\pi)$ is then described 
by following $5$-tuple $(t_0,x_0,f,g,\pi )$, where
$g=(g^{k\ell})_{k \ell}$ is a family of payoff functions indexed by $K$ and $L$.

Let us briefly explain why this reduction is possible (we refer the reader to the Appendix for more details). The past controls being commonly observed, at time $t$ both players can compute the $K\times L$ potential integral payoffs and positions induced by the pair of controls that have been played so far, i.e. in the interval $[t_0,t]$.
An auxiliary state variable in $(\RR\times \RR^n)^{K\times L}$, which includes this information, can thus be observed by both players. As a consequence, one can construct an auxiliary game satisfying $(a)$ and $(b)$, and which is strategically equivalent to the initial model.

\subsection{Strategies}
Let us define two sets of strategies: deterministic and random. In both cases, the definition of the strategy involves some partition of $[t_0,1]$: the choice of the partition is part of the strategy.
The main property of these sets is that any pair of strategies (and of a random event) determine a unique trajectory, and thus a unique outcome. The game is then said to be in normal form.
\subsubsection{Deterministic strategies}
\begin{definition}A map $\al:\V(t_0)\to \U(t_0)$ is a (deterministic) \emph{strategy} for player 1 if there exists a finite partition of $[t_0,1]$, $t_0=s_0<s_1<\cdots < s_N=1$, such that for all $\vvv_1,\vvv_2 \in\V(t_0)$ and $0\leq m<N$:
\[\vvv_1= \vvv_2 \text{ a.e. on } [s_0,s_m] \ \Longrightarrow \ \al(\vvv_1)= \al(\vvv_2)\text{ a.e. on }[s_0,s_{m+1}].\]
\end{definition}
Strategies are defined similarly for player $2$. Denote by $\A(t_0)$ (resp. $\B(t_0)$) the set of strategies of player $1$ (resp. $2$). As opposed to nonanticipative strategies, the following propety holds (\cite[Lemma 1]{CQ08}):

\textit{For any couple of strategies $(\al,\beta)\in \A(t_0)\times \B(t_0)$, there exists a unique pair $({\uuu},\vvv)\in \U(t_0)\times \V(t_0)$ such that
$\al(\vvv)=\uuu$ and $\be(\uuu)=\vvv$ a.e. on $[t_0,1]$.}

For any $(\al,\be)\in \A(t_0)\times \B(t_0)$, we denote by $\xx[t_0,x_0,\al,\be]\in \mathcal{C}^1([t_0,1];\RR^n)$ the trajectory induced by $\al$ and $\beta$, i.e. by the unique pair $(\uuu,\vvv)\in \U(t_0)\times \V(t_0)$ such that $\al(\vvv)=\uuu$ and $\be(\uuu)=\vvv$ a.e. on $[t_0,1]$.

\subsubsection{Random strategies}

The definition of random strategies involves a set $\mathcal{S}$ of (non trivial) probability spaces, which has to be stable by finite product. For simplicity, let 
$$\mathcal{S} = \{([0,1]^n, \B([0,1]^n), \mathcal{L}_n), \text{ for some } n\in \NN^*\}$$
where $ \B([0,1]^n)$ is the $\si$-algebra of Borel sets and $\mathcal{L}_n$ is the Lebesgue measure on $\RR^n$.
Endow the set of controls $\U(0)$ with the topology of the $L^1$-convergence, i.e. $\uuu_n$ converges to $\uuu$ if $\limn \int_0^1 d_U(\uuu_n(t),\uuu(t))dt=0$, where $d_U$ is the metric of $U$.
\begin{definition} A \emph{random} strategy for Player $1$ is a pair $((\Om_\al,\F_\al,\PP_\al),\al)$, where $(\Om_\al,\F_\al,\PP_\al)$ belongs to the set of probability spaces $\mathcal{S}$ and $\al:\Om_\al\times \V(t_0)\to \U(t_0)$ satisfies
\begin{itemize}
 \item $\al$ is a measurable function from $\Om_\al\times \V(t_0)$ to $\U(t_0)$, with $\Om_\al$ endowed with the $\si$-field $\F_\al$ and $\U(t_0)$ and $\V(t_0)$ with the Borel $\si$-field associated with the $L^1$ topology.
\item There exists a partition of $[t_0,1]$, $t_0=s_0<s_1<\dots<s_N=1$ such that, for any $0\leq m \leq N$, $\om \in \Om_\al$, and $\vvv_1, \vvv_2 \in \V(t_0)$:
$$\vvv_1=\vvv_2 \text{ a.e. on }[s_0,s_m] \Rightarrow \ \al(\om,\vvv_1)=\al(\om,\vvv_2) \text{ a.e. on }[s_0,s_{m+1}].$$
\end{itemize}
\end{definition}
Denote by $\A_r(t_0)$ the set
of random strategies for Player $1$. The set of random strategies for player $2$ is defined similarly, and is denoted by $\B_r(t_0)$.
\begin{notation}
 For simplicity, an element in $\A_r(t_0)$ is simply denoted by $\al$.
 The underlying probability space being always denoted by $(\Om_\al,\F_\al,\PP_\al)$.
 \end{notation}
As for the deterministic case, random 
strategies lead to a normal-form representation of the game (\cite{carda07}):
\begin{lemme}\label{fixrandom} For any pair $(\al,\beta)\in \A_r(t_0)\times \B_r(t_0)$ and any $\om=(\om_1,\om_2)\in \Om_\al \times \Om_\beta$, there exists a unique pair $(\uuu_\om,\vvv_\om)\in \U(t_0)\times \V(t_0)$ such that: 
\begin{equation}\label{fixrs}\al(\om_1,\vvv_\om)=\uuu_\om, \quad \text{ and }\quad \beta(\om_2,\uuu_\om)=\vvv_\om. \end{equation}
Moreover, the map $\rho:\Om_\al \times \Om_\be\to \U(t_0)\times \V(t_0)$,
$\om\mapsto (\uuu_\om,\vvv_\om)$ is measurable with respect to the $\F_\al\otimes \F_\be$ $\si$-field, and the topology of the $L^1$-convergence.
\end{lemme}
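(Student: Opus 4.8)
\emph{Proof idea.} The plan is to reduce, fiberwise in $\om$, to the normal-form property of \emph{deterministic} strategies recalled above (\cite[Lemma 1]{CQ08}), and then to upgrade the resulting pointwise construction to a measurable one by rerunning that construction with $\om$ frozen as a parameter.

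First I would fix $\om=(\om_1,\om_2)\in\Om_\al\times\Om_\be$. The map $\vvv\mapsto\al(\om_1,\vvv)$ then lies in $\A(t_0)$ and $\uuu\mapsto\be(\om_2,\uuu)$ lies in $\B(t_0)$: by the very definition of a random strategy, the partitions attached to $\al$ and to $\be$ witness nonanticipativity for \emph{every} $\om$, so $\al(\om_1,\cdot)$ and $\be(\om_2,\cdot)$ are deterministic strategies. Replacing these two partitions by their common refinement (the union of the partition points), I may assume $\al(\om_1,\cdot)$ and $\be(\om_2,\cdot)$ share a single partition $t_0=s_0<\cdots<s_N=1$. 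Applying \cite[Lemma 1]{CQ08} to the pair $(\al(\om_1,\cdot),\be(\om_2,\cdot))$ produces the unique $(\uuu_\om,\vvv_\om)\in\U(t_0)\times\V(t_0)$ with $\al(\om_1,\vvv_\om)=\uuu_\om$ and $\be(\om_2,\uuu_\om)=\vvv_\om$, that is (\ref{fixrs}). This settles the first assertion.

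For the measurability of $\rho$ I would recall that the fixed point in \cite[Lemma 1]{CQ08} is built by induction along the partition: on $[s_0,s_1]$ neither strategy depends on the opponent's control (the case $m=0$ of the nonanticipativity condition, whose hypothesis is vacuous since $[s_0,s_0]$ is Lebesgue-null), so $\uuu_\om$ is determined there by $\al(\om_1,\cdot)$ alone and $\vvv_\om$ by $\be(\om_2,\cdot)$ alone; and, inductively, once $\uuu_\om$ and $\vvv_\om$ are known on $[s_0,s_m]$, nonanticipativity with index $m$ pins them down on $[s_0,s_{m+1}]$. I would run this induction keeping $\om$ as a parameter, with induction hypothesis: $\om\mapsto(\uuu_\om|_{[s_0,s_m]},\vvv_\om|_{[s_0,s_m]})$ is $\F_\al\otimes\F_\be$-measurable (trivial for $m=0$). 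For the step $m\to m+1$, joint measurability of $\al$ together with the fact that $\al(\om_1,\vvv)|_{[s_0,s_{m+1}]}$ depends on $\vvv$ only through $\vvv|_{[s_0,s_m]}$ lets me write $\uuu_\om|_{[s_0,s_{m+1}]}=\Phi\big(\om_1,\vvv_\om|_{[s_0,s_m]}\big)$, where $\Phi(\om_1,w)=\al(\om_1,\mathrm{ext}(w))|_{[s_0,s_{m+1}]}$ and $\mathrm{ext}$ is a fixed continuous extension of a control on $[s_0,s_m]$ to one on $[t_0,1]$ (e.g. completing it by a constant value of $V$ on $[s_m,1]$); $\Phi$ is measurable because $\al$ is measurable and $\mathrm{ext}$ and the restriction maps are continuous for the $L^1$ metric. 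Composing $\Phi$ with the induction hypothesis gives measurability of $\om\mapsto\uuu_\om|_{[s_0,s_{m+1}]}$, and the symmetric argument with $\be$ gives it for $\om\mapsto\vvv_\om|_{[s_0,s_{m+1}]}$. At $m=N$ this is exactly the measurability of $\rho$ into $\U(t_0)\times\V(t_0)$ endowed with the $L^1$ topology.

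The step I expect to require the most care is the factorization in the induction, that is, making rigorous that ``$\al(\om_1,\vvv_\om)$ restricted to $[s_0,s_{m+1}]$ sees $\vvv_\om$ only through its restriction to $[s_0,s_m]$'' produces a \emph{jointly} measurable map $\Phi$. The device of composing with a continuous section $\mathrm{ext}$ of the restriction operator handles it, once one checks the routine facts that $\mathrm{ext}$ and the restriction maps $\U(t_0)\to L^1([s_0,s_{m+1}];U)$ and $\V(t_0)\to L^1([s_0,s_m];V)$ are continuous (they are isometries onto their images for the relevant $L^1$ metrics) and that these compositions respect the Borel $\si$-fields. The remaining points are routine.
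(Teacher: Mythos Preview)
Your argument is correct. Note, however, that the paper does not supply its own proof of this lemma: it is stated with attribution to \cite{carda07} and used as a black box. Your approach---freezing $\om$ to reduce to the deterministic fixed-point property of \cite[Lemma~1]{CQ08}, and then establishing measurability of $\rho$ by induction along a common refinement of the two partitions, factoring the nonanticipativity through a continuous extension/restriction device so as to write $\uuu_\om|_{[s_0,s_{m+1}]}$ as a measurable function of $(\om_1,\vvv_\om|_{[s_0,s_m]})$---is the natural one and all the steps go through as you describe.
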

A direct consequence of Lemma \ref{fixrandom} is that to each pair $(\al,\beta)\in\A_r(t_0)\times \B_r(t_0)$ and to each event $\om\in\Om_\al\times \Om_\beta$ corresponds a unique trajectory, denoted by $\xx[t_0,x_0,\al(\om),\be(\om)]$.

A strategy of player $1$ in the game $ \G (t_0,x_0,\pi)$ is a vector of random strategies $\hat{\al}=(\hat{\al}^k)_{k\in K}$, where $\al^k\in \A_r(t_0)$ for each $k$.
Similarly, player $2$'s set of strategies is $\B_r(t_0)^L$.
\begin{remarque}
Random strategies contain deterministic ones, and the latter contain the set of controls, i.e. $\U(t_0)\subset \A(t_0)\subset \A_r(t_0)\subset \A_r(t_0)^K$.
\end{remarque}

Let us introduce some useful notation.
\begin{notation} For any pair of strategies $(\hat{\al},\hat{\beta})\in\A_r(t_0)^K\times \B_r(t_0)^L$ and measure
$\mu\in \De(K\times L)$, the expected payoff
is defined as follows:
$$\EE^{\mu}_{\hat{\al},\hat{\be}}\bigg[g^{k\ell}\big(
\XX_1^{t_0,x_0,\hat{\al}^k,\hat{\be}^\ell}(\om)\big)\bigg]=
 \sum_{K\times L} \mu^{k\ell}\int_{\Om_\al\times \Om_\beta}
g^{k\ell}\big(
\XX_1^{t_0,x_0,\hat{\al}^k,\hat{\be}^\ell}(\om)\big) d\PP_\al \otimes d\PP_\beta(\om),$$
where $\XX_t^{t_0,x_0,\hat{\al}^k,\hat{\be}^\ell}(\om):=\xx[t_0,x_0,\hat{\al}^k(\om),\hat{\beta}^\ell (\om)](t).$
This expectation makes sense: the map $(\uuu,\vvv)\mapsto \xx[t_0,x_0,\uuu,\vvv](t)$ is
continuous in the topology of the $L^1$-convergence, so that the maps $$\om\mapsto g^{k\ell}(\XX_1^{t_0,x_0,\hat{\al}^k,\hat{\be}^\ell}(\om)), \ (k,\ell)\in K\times L$$ are measurable in $\Om_\al\times \Om_\be$ and bounded. 

\end{notation}

\subsection{The upper and lower value functions}\label{value}
The upper and lower value functions $\VV^-, \VV^+:[0,1]\times \RR^n \times \De(K\times L)\to \RR$ are defined by
\begin{eqnarray*}
 \VV^-(t_0,{x}_0,\pi)&:= &{}{}\sup_{\hat{\al} \in \A_r(t_0)^K}\inf_{\hat{\beta}\in \B_r(t_0)^L}\EE^{\pi}_{\hat{\al},\hat{\be}}\bigg[g^{k\ell}\big(
\XX_1^{t_0,x_0,\hat{\al}^k,\hat{\be}^\ell}(\om)\big)\bigg],\\
 \VV^+(t_0,{x}_0,\pi)&:= &{}{}\inf_{\hat{\be}\in \B_r(t_0)^L}\sup_{\hat{\al} \in \A_r(t_0)^K}
\EE^{\pi}_{\hat{\al},\hat{\be}}\bigg[g^{k\ell}\big(
\XX_1^{t_0,x_0,\hat{\al}^k,\hat{\be}^\ell}(\om)\big)\bigg].
\end{eqnarray*}
The inequality $\VV^-\leq \VV^+$ holds everywhere.
The value exists if $\VV^-= \VV^+$, and we denote the common function by $\VV$.
Introduce the lower and upper Hamiltonians, $H^-,H^+:[0,1]\times \RR^n\times \RR^n\to \RR$ as follows: 
\begin{eqnarray*}
H^-(t,x,\xi)&:=&\sup_{u\in U}\inf_{v\in V}\,  \langle f(t,x,u,v),\xi\rangle,\\
H^+(t,x,\xi)&:=&\inf_{v\in V}\sup_{u\in U}\, \langle f(t,x,u,v),\xi\rangle.
\end{eqnarray*}
We are mostly concerned with the existence of the value function and its characterization.
Unlike standard differential games, where one can identify $\VV^-$ (resp. $\VV^+$) as the unique viscosity
solution of a (first-order) Hamilton-Jacobi-Isaacs equation with Hamiltonian $H^-$ (resp. $H^+$),
there is little hope in characterizing $\VV^-$ (resp. $\VV^+$) in the context of differential games with asymmetric information.
Rather, we will characterize the value function $\VV$, when it exists.
Isaacs' condition holds in the rest of the paper: \\

\textbf{Isaacs' condition.} $H^-(t,x,\xi)=H^+(t,x,\xi)$ for all $(t,x,\xi)\in [0,1]\times \RR^n\times \RR^n.$\\

We denote by $H$ the common Hamiltonian.
Cardaliaguet \cite{carda07} established the existence of the value function under Assumptions \ref{ass1JD}, \ref{ass2JD} and
Isaacs condition, in the case where $k$ and $\ell$ are drawn \emph{independently}.
The value function was characterized as the unique \emph{dual solution} of the following Hamilton-Jacobi-Isaacs equation:
\begin{equation}\label{HJI}
\partial_t w(t,x)+H(t,x,Dw(t,x))=0, \quad \text{on } (0,1)\times \RR^n.
\end{equation}
The definition of dual solutions involves the Fenchel conjugate and the notion of viscosity solutions introduced by Crandall and Lions \cite{CL83},
and used in the framework of differential games for the first time by Evans and Souganidis \cite{ES84}. Precisely, the notion of dual solution is the following: the function $V(t_0,x_0,\, \cdot \,,\, \cdot\,)$ is concave-convex and Lipschitz continuous with respect to $(p,q)$, the concave conjugate of the value function with respect to $p$ is a sub-solution of the Hamilton-Jacobi-Isaacs equation, while the convex conjugate of the value function with respect to $q$ is a super-solution of the same equation. The extension of this notion to the general, dependent case is left to Theorem \ref{main1}.


\subsection{Tools}
In this section, we start by defining a notion of convexity for functions defined in $\De(K\times L)$. Our definition goes back to Heuer \cite{heuer92} and is equivalent, yet easier to handle, to the notion of $I$-convexity given by Mertens and Zamir \cite{MZ71}.
Second, we recall the definition of viscosity solutions, and of some classical objects from convex analysis, such as the Fenchel conjugate and the sub-gradients.

\subsubsection{Convexity in $\De(K\times L)$}
For any probability measure $\mu \in \De(K\times L)$, let
$\mu^K\in \De(K)$ denote its marginal on $K$ (resp. $L$) and let $\mu^{L|K}\in \De(L)^K$ 
be the matrix of conditional probabilities, i.e.: 
$$\mu^K(k):=\sum_{\ell\in L} \mu(k,\ell)\ \text{ and }\
 \mu^{L|K}(\ell|k):=\frac{ \mu(k,\ell)}{\mu^K(k)}.$$
The probability $\mu$ is the direct product of $\mu^K$ and $\mu^{L|K}$, i.e.
$$\mu(k,\ell)=\mu^K(k)\mu^{L|K}(\ell|k), \quad \text{for all } (k,\ell)\in K\times L.$$
Similarly, $\mu=\mu^L\otimes \mu^{K|L}$, where $\mu^L\in \De(L)$ is the marginal on $L$ and $\mu^{K|L}\in \De(K)^L$ is the matrix of conditionals on $K$ given $\ell$.
\begin{notation} Let $\varphi:\De(K\times L)\to \RR$ be some map, and let $Q\in \De(L)^K$ be some matrix of conditional probabilities. 
We denote by $\varphi_K( \, \cdot \, , Q):\De(K)\to \RR$ 
the following function: 
\begin{equation*}
\varphi_K(p,Q):=\varphi(p\otimes Q), \quad \text{ for all } p\in \De(K).
\end{equation*} 
Similarly, one defines $\varphi_L(P,\, \cdot \,):\De(L)\to \RR$ for any $P\in \De(K)^L$ as follows:
\begin{equation*}
\varphi_L(P,q):=\varphi(q\otimes P), \quad \text{ for all } q\in \De(L).
\end{equation*} 

\end{notation}
\begin{definition} The map $\varphi:\De(K\times L)\to \RR$ is
\begin{itemize}
\item $K$-concave if $\varphi_K(\, \cdot \, , Q)$ is concave on $K$, for all $Q\in \De(L)^K$;
\item $L$-convex if $\varphi_L(P, \, \cdot \,)$ is convex on $L$, for all $P\in \De(K)^L$;
\end{itemize}
\end{definition}




\subsubsection*{Fenchel duality}
\begin{definition} 
For any $\varphi:\RR^n\to [-\infty,+\infty]$, the \emph{Fenchel transform} of $\varphi$, denoted by $\varphi^*$ is defined by
$\varphi^*:\RR^n\to [-\infty,+\infty], \ \varphi^*(x^*)=\sup_{x\in \RR^n} \langle x,x^*\rangle - \varphi(x).$
\end{definition}
Here, we define two slightly different transforms which are more convenient in the framework of games with incomplete information. The reason is that they correspond to the dual operators, one for each player, in the general theory of duality for games with incomplete information (see \cite[Section 4.6]{sorin02}).
\begin{definition} Let $\varphi:\RR^n\to \RR$.
Define its upper and lower conjugates $\varphi^\sharp, \varphi^\flat:\RR^n\to \RR$ as follows:
\begin{eqnarray*}\varphi^\sharp(x)&:=&
\sup_{y\in \RR^n}\varphi(y)-\langle y, x\rangle,\\
\varphi^\flat(y)&:=&
\inf_{x\in \RR^n} \varphi(x)+\langle x,y \rangle.
\end{eqnarray*}
\end{definition}
\noindent For all  $x,y \in \RR^n$ 
the following relations are straightforward:
\[ \varphi^\sharp(x)=(-\varphi)^{*}(-x),\quad  \text{and}\quad \varphi^\flat(y)=-\varphi^*(-y).\]

\subsubsection*{Sub-gradients}
\begin{definition}
 For any $\varphi:\RR^n\to [-\infty,+\infty]$ and $x\in \RR^n$, the \emph{sub-differential} 
  of $\varphi$ at $x$ is defined as follows: 
\begin{eqnarray*}\partial^-\varphi(x)&:=& \{x^*\in \RR^n \, | \, \varphi(x)+\langle x^*, y-x\rangle \leq \varphi(y), \  \forall y\in \RR^n \}. 
\end{eqnarray*}
\end{definition}
The super-differential $\partial^+ \varphi(x)$ is defined similarly.
The following result can be found in \cite[Section 12]{rockafellar97}.
 \begin{theoreme}[Fenchel equality]
 \label{fenchel2} Let $\varphi:\RR^n\to \RR\cup \{+\infty\}$ be convex and proper. Then $x^*\in \partial^-\varphi(x)$ if and only if  $\varphi^*(x^*)+\varphi(x)=\langle x,x^*\rangle.$
\end{theoreme}
Next, let us state a useful, straightforward lemma which follows directly from Fenchel equality and
the definitions of $\partial^-$ and $\partial^+$.
\begin{notation} Without further mention, functions defined on $\De(K)$ (resp. $\De(L)$) are extended to $\RR^K$ (resp. $\RR^L$) by $-\infty$ (resp. $+\infty$) in $\RR^K\backslash \De(K)$ (resp. $\RR^L\backslash \De(L)$). The sub-differentials (resp. super-differentials) of $f$ are defined according to this extension. %
\end{notation}
\begin{lemme}\label{fenchi} Let $\varphi:\De(K)\to \RR$  be a concave function and let $x\in \partial^+ \varphi(p)$. Then,
$$\varphi^\sharp(x)=\varphi(p)-\langle x,p\rangle \geq \varphi(p')-\langle x,p'\rangle, \quad \forall p'\in \De(K).$$
Similarly, if $\phi:\De(L)\to \RR$ is convex and
$y\in \partial^-\phi(q)$, then
$$\phi^\flat(-y)=\phi(q)+\langle -y,q\rangle \leq \phi(q')+\langle -y,q'\rangle, \quad \forall q'\in \De(L).$$
\end{lemme}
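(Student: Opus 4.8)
The plan is to deduce both statements directly from the Fenchel equality (Theorem \ref{fenchel2}), after translating between the symmetric conjugates $\varphi^\sharp,\phi^\flat$ and the classical Fenchel transform $\varphi^*$, and between the super/sub-differentials $\partial^\pm$ and $\partial^-$, using the sign conventions already recorded in the excerpt: $\varphi^\sharp(x)=(-\varphi)^*(-x)$, $\phi^\flat(y)=-\phi^*(-y)$, and the convention that a concave function on $\De(K)$ is extended by $-\infty$ off $\De(K)$ (so $-\varphi$ is convex and proper on $\RR^K$).

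First I would treat the concave case. Given that $\varphi:\De(K)\to\RR$ is concave and $x\in\partial^+\varphi(p)$, I note that by definition of the super-differential this means $-x\in\partial^-(-\varphi)(p)$, i.e. $(-x)$ is a subgradient of the convex proper function $\psi:=-\varphi$ at $p$. Applying Theorem \ref{fenchel2} to $\psi$ at the point $p$ with subgradient $-x$ gives $\psi^*(-x)+\psi(p)=\langle p,-x\rangle$, that is $(-\varphi)^*(-x)-\varphi(p)=-\langle p,x\rangle$, hence $\varphi^\sharp(x)=(-\varphi)^*(-x)=\varphi(p)-\langle x,p\rangle$. This establishes the equality. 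The inequality $\varphi(p)-\langle x,p\rangle\ge\varphi(p')-\langle x,p'\rangle$ for all $p'\in\De(K)$ is then immediate from the definition of $\varphi^\sharp$ as a supremum over $y\in\RR^n$: indeed $\varphi^\sharp(x)=\sup_y\big(\varphi(y)-\langle y,x\rangle\big)\ge\varphi(p')-\langle p',x\rangle$ for every $p'\in\De(K)$ (using the $-\infty$ extension so that only $p'\in\De(K)$ contributes), and we just showed the supremum is attained at $p$.

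The convex case is entirely symmetric. Given $\phi:\De(L)\to\RR$ convex (hence proper after the $+\infty$ extension off $\De(L)$) and $y\in\partial^-\phi(q)$, Theorem \ref{fenchel2} applied to $\phi$ at $q$ gives $\phi^*(y)+\phi(q)=\langle q,y\rangle$, so $\phi^*(y)=\langle q,y\rangle-\phi(q)$ and therefore $\phi^\flat(-y)=-\phi^*(y)=\phi(q)-\langle q,y\rangle=\phi(q)+\langle -y,q\rangle$; the inequality $\phi(q)+\langle -y,q\rangle\le\phi(q')+\langle -y,q'\rangle$ for all $q'\in\De(L)$ follows again from $\phi^\flat(-y)=\inf_x\big(\phi(x)+\langle x,-y\rangle\big)$ being attained at $q$, i.e. it is just the defining inequality of $y\in\partial^-\phi(q)$ restricted to $\De(L)$.

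The only point requiring a little care — and the one I would flag explicitly — is the interplay between the $\RR^n$-conjugates/sub-differentials appearing in Theorem \ref{fenchel2} and the convention of extending functions on the simplex by $\pm\infty$: one must check that $-\varphi$ (resp. $\phi$) is indeed proper and convex on all of $\RR^K$ (resp. $\RR^L$) so that Theorem \ref{fenchel2} applies, and that a subgradient at an interior-relative point $p\in\De(K)$ exists in the stated sense — but this is exactly why the $-\infty$ (resp. $+\infty$) extension is imposed in the preceding Notation, so no genuine obstacle arises; the lemma is, as claimed, "straightforward."
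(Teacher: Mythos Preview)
Your proof is correct and follows exactly the approach the paper indicates: the paper does not spell out a proof but simply states that the lemma ``follows directly from Fenchel equality and the definitions of $\partial^-$ and $\partial^+$,'' which is precisely what you do via the identities $\varphi^\sharp(x)=(-\varphi)^*(-x)$ and $\phi^\flat(y)=-\phi^*(-y)$ together with Theorem~\ref{fenchel2}. There is nothing to add.
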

\noindent Note that the scalar products in Lemma \ref{fenchi} are in $\RR^K$ and $\RR^L$, respectively.
\begin{definition} Given $\varphi:\De(K)\to \RR$, let $\mathcal{E}_\varphi$ denote the set of extreme points of $\varphi$ on $\De(K)$. Explicitly, $p\in \mathcal{E}_\varphi$ if the equality $(p,\varphi(p))=\sum_{r\in R} \la_r (p_r,\varphi(p_r))$ with $R$ finite and $\la\in \De(R)$, $\la \gg0$ and $p_r\in \De(K)$ implies $p_r=p$ for all $r\in R$.

\end{definition}
\begin{lemme}\label{envelope} Let $\varphi:\De(K)\to \RR$  be a concave function. Then $\varphi^\sharp$ is differentiable at $x_0\in \RR^K$ if and only if $p_0:=-\nabla \varphi^\sharp(x_0)\in \De(K)$ is an extreme point of $\varphi$.
\end{lemme}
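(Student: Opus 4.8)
The plan is to exploit the Fenchel equality (Theorem \ref{fenchel2}), translated through the relations $\varphi^\sharp(x)=(-\varphi)^*(-x)$, to turn the statement about differentiability of $\varphi^\sharp$ into a statement about the uniqueness of the sub-gradient of $(-\varphi)^*$, and then to identify that uniqueness with the extreme-point condition on $\varphi$. Concretely, write $\psi:=-\varphi$, a convex function on $\De(K)$, extended by $+\infty$ outside $\De(K)$ so that $\psi:\RR^K\to\RR\cup\{+\infty\}$ is proper and convex; then $\varphi^\sharp(x)=\psi^*(-x)$, so $\varphi^\sharp$ is differentiable at $x_0$ if and only if $\psi^*$ is differentiable at $-x_0$, with $\nabla\varphi^\sharp(x_0)=-\nabla\psi^*(-x_0)$. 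Thus $p_0:=-\nabla\varphi^\sharp(x_0)$ equals $\nabla\psi^*(-x_0)$ whenever these derivatives exist.

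First I would recall the standard fact (from \cite[Section 12 or 25]{rockafellar97}) that for a closed proper convex function $\psi$, the conjugate $\psi^*$ is differentiable at a point $y_0$ if and only if the sub-differential $\partial^-\psi^*(y_0)$ is a singleton, and that, by Fenchel equality, $\partial^-\psi^*(y_0)=\{p : \psi(p)+\psi^*(y_0)=\langle p,y_0\rangle\}=\{p : y_0\in\partial^-\psi(p)\}$. Applying this with $y_0=-x_0$: $\varphi^\sharp$ is differentiable at $x_0$ iff there is a \emph{unique} $p\in\De(K)$ with $-x_0\in\partial^-\psi(p)=\partial^-(-\varphi)(p)=-\partial^+\varphi(p)$, i.e. a unique $p\in\De(K)$ with $x_0\in\partial^+\varphi(p)$; and in that case $p_0=\nabla\psi^*(-x_0)=p$, so in particular $p_0\in\De(K)$ automatically. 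So the lemma reduces to: the point $p$ with $x_0\in\partial^+\varphi(p)$ is unique if and only if that $p$ is an extreme point of $\varphi$ in the sense of Definition preceding the lemma.

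For the equivalence of ``uniqueness of the contact point'' with ``extreme point'', I would argue both directions geometrically, using the affine function $a(p'):=\langle x_0,p'\rangle+c$ (with $c:=\varphi(p)-\langle x_0,p\rangle$) which, by Lemma \ref{fenchi}, is a supporting affine function to the concave $\varphi$ touching it exactly on the set $C:=\{p':x_0\in\partial^+\varphi(p')\}=\{p':\varphi(p')=a(p')\}$. If $p$ is not an extreme point of $\varphi$, then $(p,\varphi(p))=\sum_r\la_r(p_r,\varphi(p_r))$ nontrivially with $\la\gg0$; since each $(p_r,\varphi(p_r))$ lies weakly below the supporting hyperplane and the convex combination lies \emph{on} it, each $(p_r,\varphi(p_r))$ lies on it too, so every $p_r\in C$, giving a point of $C$ distinct from $p$ — hence non-uniqueness. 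Conversely, if $C$ contains some $p'\neq p$, then every point of the segment $[p,p']$ lies in $C$ by concavity (the graph over the segment is simultaneously $\le a$ and, being concave, $\ge$ the chord, which equals $a$ there), so $p=\tfrac12 p+\tfrac12 p'$ expresses $(p,\varphi(p))$ as a nontrivial convex combination of distinct points of the graph, so $p$ is not an extreme point. This closes the equivalence.

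The main obstacle I anticipate is purely bookkeeping rather than conceptual: one must be careful about the convention (stated in the Notation preceding the lemma) that $\varphi$ is extended by $-\infty$ off $\De(K)$, so that $\psi=-\varphi$ is extended by $+\infty$ and is a genuine closed proper convex function on $\RR^K$ to which Rockafellar's differentiability-of-conjugate theorem applies; and one must check that the domain $\De(K)$ being a lower-dimensional simplex in $\RR^K$ does not cause $\psi^*$ to fail to be finite or differentiable for spurious reasons — but since $\De(K)$ is compact, $\psi$ is finite and continuous on it, $\psi^*$ is finite everywhere on $\RR^K$, and the only subtlety is that sub-gradients of $\psi$ are determined only up to the orthogonal complement of the affine hull of $\De(K)$; this is harmless because $p_0$ is asserted to be extracted from $\nabla\psi^*(-x_0)\in\De(K)$, which lands in the affine hull by construction. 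Everything else is a direct chaining of Theorem \ref{fenchel2}, Lemma \ref{fenchi}, and the cited material of \cite{rockafellar97}.
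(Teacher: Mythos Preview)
Your approach is exactly the unpacking of the paper's one-line proof (``follows directly from the Envelope Theorem''): reduce differentiability of $\varphi^\sharp$ at $x_0$ to the contact set $C:=\{p\in\De(K):x_0\in\partial^+\varphi(p)\}$ being a singleton (Danskin/Rockafellar), and then tie the singleton condition to extremality. Your forward implication --- if $C=\{p_0\}$ then $p_0$ is extreme, since any nontrivial graph-decomposition $(p_0,\varphi(p_0))=\sum_r\la_r(p_r,\varphi(p_r))$ forces every $p_r$ onto the supporting affine function and hence into $C$ --- is correct and is the actual content of the envelope argument.

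The gap is in your converse paragraph. You write that if $C$ contains some $p'\neq p$ then ``$p=\tfrac12 p+\tfrac12 p'$ expresses $(p,\varphi(p))$ as a nontrivial convex combination of distinct points of the graph''. But $\tfrac12 p+\tfrac12 p'$ is the \emph{midpoint} of $[p,p']$, not $p$ itself; your argument shows at best that this midpoint is not extreme. The implication you are really after --- ``$p\in C$ and $|C|>1$ $\Rightarrow$ $p$ is not extreme'' --- is in fact false: for $K=\{1,2\}$, $\varphi(p)=\min(p_1,p_2)$ and $x_0=(1,0)$, one computes $C=\{(t,1-t):0\le t\le\tfrac12\}$, and the vertex $(0,1)\in C$ is an extreme point of $\varphi$. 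What the other direction of the lemma (and its use in Corollary~\ref{equal}) actually requires is that every extreme point $p_0$ of $\varphi$ be \emph{exposed}: there should exist \emph{some} $x_0$ with $C(x_0)=\{p_0\}$. This is a different statement from the one you argue, and it does not follow from your segment-in-$C$ observation; you would need a separate exposedness argument for the hypograph of $\varphi$ (or a Straszewicz-type density statement combined with continuity).
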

\begin{proof} It follows directly form the Envelope Theorem.
\end{proof}

\subsubsection*{Viscosity solutions}
\begin{definition}[Viscosity solutions]
 A map $w:[0,1]\times \RR^n\to \RR$ is a
\begin{itemize}
\item \emph{(viscosity) super-solution} of \eqref{HJI} if it is lower-semi-continuous in $(0,1)\times \RR^n$ and if, for any test function $\varphi\in \mathcal{C}^1([0,1]\times \RR^n;\RR)$ such that $w-\varphi$ has a local minimum at some point $(t,x)\in (0,1)\times \RR^n$, one has:
$$ \partial_t \varphi(t,x)+H(t,x,D\varphi(t,x))\leq 0.$$
\item \emph{(viscosity) sub-solution} of \eqref{HJI} if it is upper-semi-continuous in $(0,1)\times \RR^n$ and if, for any test function $\phi\in \mathcal{C}^1([0,1]\times \RR^n;\RR)$ such that $w-\phi$ has a local maximum at some point $(t,x)\in (0,1)\times \RR^n$, one has:
$$ \partial_t \phi(t,x)+H(t,x,D\phi(t,x))\geq 0.$$
\item \emph{viscosity solution} of \eqref{HJI} if it is both a super-solution and a sub-solution.\end{itemize}
\end{definition}
Three basic properties of \emph{viscosity solutions} are existence, uniqueness and stability with respect to uniform convergence.

\section{Main results}\label{sec_mainresults} 
In this section we state and prove our main result: the existence and characterization of the value function for
differential games with asymmetric and correlated information.

We can now state our main result. In the sequel, {sub-solutions} (resp. {super-solutions}) refer to \textit{viscosity sub-solutions} (resp. \textit{super-solutions}).

\begin{theoreme}[Existence and characterization of the value $\VV$]\label{main1} Assume Isaacs condition. Then, the value exists and is the unique Lipschitz continuous function on $[0,1]\times \RR^n\times \De(K\times L)$ satisfying:
\begin{itemize}
  \item[-] $p\mapsto \VV_K(t,x,p,Q)$ is concave for all $(t,{x},Q)\in [0,1]\times \RR^n \times \De(L)^K$,
  \item[-] $q\mapsto \VV_L(t,{x},P,q)$ is convex for all $(t,{x},P)\in [0,1]\times \RR^n\times \De(K)^L$,
 \item[-] For all $(\zeta,Q)\in \RR^K\times \De(L)^K$,
$(t,x)\mapsto\VV_K^\sharp(t,x,\zeta,Q)$ is a sub-solution of
\eqref{HJI}.
 \item[-] For all $(P,\eta)\in \De(K)^L \times \RR^L$,
$(t,{x})\mapsto \VV_L^{\flat}(t,{x},P,\eta)$ is a super-solution of
\eqref{HJI}.
\item[-] $\VV(1,{x},\pi)=\sum_{k,\ell}\pi^{k\ell} g^{k\ell}(x^{k\ell})$, for all $({x},\pi)\in \RR^n\times \De(K\times L)$.
\end{itemize}
\end{theoreme}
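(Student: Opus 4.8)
The plan is to follow the duality/comparison-principle strategy of Cardaliaguet \cite{carda07}, but to organize everything around the decomposition $\pi = \mu^K \otimes \mu^{L|K} = \mu^L \otimes \mu^{K|L}$, treating the conditional matrix $Q\in\De(L)^K$ (resp. $P\in\De(K)^L$) as a frozen parameter and the marginal as the genuine ``incomplete-information'' variable. The argument splits into three parts: (i) existence of the value, (ii) that $\VV$ satisfies the five listed properties, and (iii) that these properties characterize $\VV$ uniquely (a comparison principle). First I would establish the \emph{regularity and concavity–convexity} statements directly from the definitions: Lipschitz continuity in $(t_0,x_0)$ follows from Assumption \ref{ass1JD} as in standard differential games; Lipschitz continuity in $\pi$ follows because the payoff is bilinear in $\pi$ and the strategy spaces do not depend on $\pi$. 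For $K$-concavity one uses the now-classical splitting argument: given $p = \sum_r \la_r p_r$ in $\De(K)$ with a fixed conditional $Q$, Player $1$ can, after learning his type $k$, perform a \emph{secret lottery} (this is exactly why random strategies $\A_r(t_0)^K$ are needed) to generate the posterior $p_r$ with the correct probabilities, which shows $\VV_K(t,x,\cdot,Q)\geq \vex$-type inequality; here the key point is that freezing $Q$ makes the relevant informational structure on $K$ formally identical to the independent case, so Heuer's notion of $K$-concavity is the right one. The dual convexity for Player $2$ is symmetric.

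Second, to obtain the sub-/super-solution properties I would pass to the \emph{dual game}. Fixing $(\zeta,Q)\in\RR^K\times\De(L)^K$, I consider the game where Player $1$'s type $k$ is \emph{not} drawn but instead Player $1$ chooses it, paying an entrance fee $\langle\zeta,\cdot\rangle$; the value of this dual game is exactly $\VV_K^\sharp(t,x,\zeta,Q)$ by Lemma \ref{fenchi} and the standard sup-duality identity $\VV_K^\sharp(\cdot,\zeta,Q)=\sup_{p}\big(\VV_K(\cdot,p,Q)-\langle\zeta,p\rangle\big)$. In this dual game Player $1$ has full information about his own type (he picks it), so along the lines of Cardaliaguet--Quincampoix the dual value function is a viscosity sub-solution of \eqref{HJI}: the sub-dynamic-programming inequality $\VV_K^\sharp(t_0,x_0,\zeta,Q)\geq \sup_u \VV_K^\sharp(t_0+h, x(t_0+h),\zeta,Q)-o(h)$ is obtained by an open-loop/nonanticipative-strategy argument on $[t_0,t_0+h]$ (Player $1$ plays a constant control $u$, Player $2$ responds, and one uses that the value is unchanged by post-processing), then one differentiates against a test function. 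The super-solution property for $\VV_L^\flat$ is the mirror image, using the convex conjugate in the $L$-variable. The terminal condition is immediate from the definition of $\J^{k\ell}$ (after the Section \ref{reduc} reduction, $\VV(1,x,\pi)=\sum\pi^{k\ell}g^{k\ell}(x^{k\ell})$).

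Third, for uniqueness I would prove a \emph{comparison principle}: if $W$ is any Lipschitz function satisfying the five properties, then $W=\VV$. The standard route is to show $W\le\VV^+$ and $W\ge\VV^-$. For $W\ge\VV^-$: fix $\pi=\mu^K\otimes Q$, pick an extreme point $p$ of $\VV_K(t,x,\cdot,Q)$ and a supergradient $\zeta\in\partial^+\VV_K(t,x,p)$; by Lemma \ref{fenchi}, $\VV_K^\sharp(t,x,\zeta,Q)=\VV_K(t,x,p,Q)-\langle\zeta,p\rangle$, and the analogous equality holds for $W$ if one can choose a \emph{common} $\zeta$ — this is where Lemma \ref{envelope} enters, identifying points of differentiability of the conjugate with extreme points, so that the concave functions $\VV_K(t,x,\cdot,Q)$ and $W_K(t,x,\cdot,Q)$ can be compared at extreme points through their (sub-)solution conjugates. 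Using the comparison principle for \eqref{HJI} between the sub-solution $\VV_K^\sharp(\cdot,\zeta,Q)$ (or $W_K^\sharp$) and a super-solution, plus the matching terminal data, one gets an inequality between the conjugates, which by concavity and the extreme-point argument propagates back to an inequality between $\VV_K$ and $W_K$ at \emph{all} points of $\De(K)$ (a concave function is the infimum of its affine majorants, and it suffices to control extreme points). Running the symmetric argument in the $L$-variable gives the reverse inequality, and combining yields $W=\VV$, which simultaneously proves existence ($\VV^-=\VV^+=W$).

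\textbf{Main obstacle.} The genuinely new difficulty compared with \cite{carda07} is bookkeeping the two-parameter dual objects $\VV_K^\sharp(t,x,\zeta,Q)$ and $\VV_L^\flat(t,x,P,\eta)$ \emph{consistently}: one must check that freezing the conditional $Q$ (resp. $P$) really does decouple the problem so that the one-sided dual game is well-posed and its value is the stated conjugate, and then that the two families of one-sided statements together pin down $\VV$ on all of $\De(K\times L)$ — i.e. that knowing $\VV_K(\cdot,\cdot,Q)$ for every $Q$ and $\VV_L(\cdot,P,\cdot)$ for every $P$ determines $\VV$. Equivalently, the crux is proving the comparison step (the $W\gtrless\VV^\pm$ inequalities) in the dependent case, where the convex-analytic ``regularity'' lemmas (Lemmas \ref{fenchi} and \ref{envelope}) must be applied slicewise in $Q$ and the extreme-point argument must be uniform enough in $Q$ to recombine into a statement on $\De(K\times L)$; controlling this uniformity, together with the splitting construction that requires the stability of $\mathcal S$ under finite products, is where the real work lies.
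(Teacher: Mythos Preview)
Your three-step architecture (regularity, dual sub/super-solutions, comparison) matches the paper, and the regularity step is fine. But the sub-dynamic programming inequality you write for $\VV_K^\sharp$ has the wrong sign and is argued through the wrong player. The inequality that actually yields the \emph{sub}-solution property is
\[
\VV_K^{+\sharp}(t,x,\zeta,Q)\;\le\;\inf_{\beta\in\B(t)}\sup_{\al\in\A(t)}\VV_K^{+\sharp}\big(t+h,\xx[t,x,\al,\be](t+h),\zeta,Q\big),
\]
and it is proved by constructing \emph{Player 2's} strategy, not Player 1's: starting from the dual formula $\VV_K^{+\sharp}=\inf_{\hat\beta\in\B_r^L}\sup_{\al}\max_k\big\{\EE^{k,Q}_{\hat\beta}[g^{k\ell}(\dots)]-\zeta^k\big\}$, one first observes (convexity of $z\mapsto\max_kz^k$) that Player 1 may be taken deterministic, and then builds $\hat\beta_\ep$ by playing an $\ep$-optimal $\be_0\in\B(t)$ on $[t,t+h]$, observing the reached state, and switching to an $\ep$-optimal dual strategy on $[t+h,1]$. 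Your ``Player 1 plays a constant $u$, Player 2 responds'' argument goes the other way: it would at best yield $\VV_K^\sharp(t)\ge\sup_u\inf_v\VV_K^\sharp(t+h,\dots)$, which produces a super-solution inequality, not a sub-solution.

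The comparison step also does not work as you sketch it. You propose to compare $W_K^\sharp(\cdot,\zeta,Q)$ with ``a super-solution'' via the scalar comparison principle for \eqref{HJI} and then recombine over extreme points. But the only super-solutions at hand are the $L$-conjugates $w_L^{\flat}(\cdot,P,\eta)$, parametrized by $(P,\eta)\in\De(K)^L\times\RR^L$, not by $(\zeta,Q)$; there is no way to pair a $K$-conjugate sub-solution with an $L$-conjugate super-solution slice by slice, and Lemma~\ref{envelope} does not supply this link. The paper instead proves a dedicated comparison theorem: if $w_K^{2\,\sharp}$ is a sub-solution for every $(\zeta,Q)$ and $w_L^{1\,\flat}$ is a super-solution for every $(P,\eta)$, with $w^1(1,\cdot)\ge w^2(1,\cdot)$, then $w^1\ge w^2$ everywhere. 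Its proof doubles variables in $(t,x,s,y)$ \emph{and} maximizes simultaneously over $\pi\in\De(K\times L)$; at the maximizer $\pi_\ep=p_\ep\otimes Q_\ep=q_\ep\otimes P_\ep$, optimality in $\pi$ forces any $\eta_\ep\in\partial^-_q w^2_L(s_\ep,y_\ep,P_\ep,q_\ep)$ to lie also in $\partial^-_q w^1_L(t_\ep,x_\ep,P_\ep,q_\ep)$, so one can choose a \emph{common} subgradient (and symmetrically a common $\zeta_\ep$ on the $K$-side); Lemma~\ref{fenchi} then converts both conjugates into test-function inequalities at the same $(t_\ep,x_\ep)$ and $(s_\ep,y_\ep)$, after which the standard Lipschitz-Hamiltonian contradiction goes through. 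Applied with $w^1=\VV^-$, $w^2=\VV^+$ this gives $\VV^-\ge\VV^+$, hence existence, and the same theorem yields uniqueness.
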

\begin{remarque}
 The characterization of the value function in Theorem \ref{main1} gives an explicit definition
 of what was called a \emph{dual solution} to the Hamilton-Jacobi equation \eqref{HJI} in \cite{carda07}.
\end{remarque}

\subsection{Proof of Theorem \ref{main1}}
We follow the main ideas in the proof of \cite{carda07}. Also, we use the duality techniques introduced by De Meyer \cite{dm96} for games with incomplete information on one side (see \cite[Chapter 2]{sorin02} for a general presentation), and extended in \cite{GOB13} to the case of general type dependence. The proof can be summarized as follows.\\
\indent \emph{Step 1}.
One proves the $K$-concavity, $L$-convexity and Lipschitz continuity of both the upper and the lower value functions. These results being classical, we have preferred to omit the proof. \\
\indent \emph{Step 2}. One  proves a sub-dynamic programming principle for $\VV_K^{+\sharp}$ at fixed
$(\zeta, Q)$. For that, we use an  alternative expression for $\VV_K^{+\sharp}(t,{x},\zeta,Q)$ which, again,
is a general property of any normal-form game with incomplete information. We then deduce that
$(t,x)\mapsto \VV^{+\sharp}(t,x,\zeta,Q)$ is a sub-solution of  \eqref{HJI}.
Symmetrical results hold for $\VV^{-\flat}$ by exchanging the roles of the players: $(t,x)\mapsto \VV^{-\flat}(t,x,P,\eta)$ is a super-solution of  \eqref{HJI}. \\
\indent \emph{Step 3}. One concludes using a new comparison theorem inspired by the analog result in \cite{carda07}.
Here, Assumption \ref{ass1JD} ensures that the Hamiltonian is regular enough.
\subsection{Regularity}
\begin{lemme}\label{regg} The upper and lower value function are $K$-concave, $L$-convex and Lipschitz continuous. 
 \end{lemme}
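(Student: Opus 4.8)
The plan is to establish the three properties separately, for both $\VV^-$ and $\VV^+$: the $K$-concavity and the $L$-convexity all come from one and the same \emph{splitting} construction (used four times, permuting the roles of the two players and of $\sup$ and $\inf$), while the Lipschitz continuity follows from Gronwall's estimate on the trajectory together with the fact that the expected payoff is affine in $\pi$. All of this is classical and I only sketch the arguments; I write $\EE^{\mu}_{\hat\al,\hat\be}\big[\,\cdots\,\big]$ for the expected payoff $\EE^{\mu}_{\hat\al,\hat\be}\big[g^{k\ell}(\XX_1^{t_0,x_0,\hat\al^k,\hat\be^\ell}(\om))\big]$.

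\emph{Splitting.} I will show that, for every fixed $(t_0,x_0)$ and $Q\in\De(L)^K$, the map $p\mapsto\VV^-(t_0,x_0,p\otimes Q)$ is concave; the other three cases are analogous. Let $p=\la p^1+(1-\la)p^2$ with $\la\in(0,1)$ and $p^1,p^2\in\De(K)$, fix $\ep>0$, and pick $\hat\al^1,\hat\al^2\in\A_r(t_0)^K$ with $\inf_{\hat\be\in\B_r(t_0)^L}\EE^{p^i\otimes Q}_{\hat\al^i,\hat\be}\big[\,\cdots\,\big]\geq\VV^-(t_0,x_0,p^i\otimes Q)-\ep$ for $i=1,2$ (such strategies exist by definition of the sup defining $\VV^-$). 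Construct $\hat\al\in\A_r(t_0)^K$ thus: knowing $k$, Player $1$ draws $j\in\{1,2\}$ with $\PP(j=1\mid k)=\la p^1(k)/p(k)$ (arbitrarily if $p(k)=0$) and then plays $\hat\al^j$ on an independent copy of its probability space; this is a legitimate element of $\A_r(t_0)^K$ because $\mathcal{S}$ is stable under finite products (to host the extra coin) and the non-anticipation property is inherited on the common refinement of the partitions of $\hat\al^1$ and $\hat\al^2$. The point specific to correlated types is the elementary identity: if $(k,\ell)\sim p\otimes Q$ and $j$ is drawn as above, the joint law of $(k,\ell,j)$ puts mass $\la\,(p^1\otimes Q)(k,\ell)$ on $\{j=1\}$ and $(1-\la)\,(p^2\otimes Q)(k,\ell)$ on $\{j=2\}$ — splitting the $K$-marginal leaves the conditionals $Q(\cdot\mid k)$ untouched, so one stays within ``$p\mapsto\VV(t_0,x_0,p\otimes Q)$ at fixed $Q$''. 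Consequently, for every $\hat\be\in\B_r(t_0)^L$,
\begin{align*}
\EE^{p\otimes Q}_{\hat\al,\hat\be}\big[\,\cdots\,\big]
&=\la\,\EE^{p^1\otimes Q}_{\hat\al^1,\hat\be}\big[\,\cdots\,\big]+(1-\la)\,\EE^{p^2\otimes Q}_{\hat\al^2,\hat\be}\big[\,\cdots\,\big]\\
&\geq\la\,\VV^-(t_0,x_0,p^1\otimes Q)+(1-\la)\,\VV^-(t_0,x_0,p^2\otimes Q)-\ep,
\end{align*}
because each $\hat\al^i$ guarantees $\VV^-(t_0,x_0,p^i\otimes Q)-\ep$ against \emph{any} strategy of Player $2$. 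Taking $\inf$ over $\hat\be$ (the right-hand side being constant in $\hat\be$), then $\sup$ over $\hat\al$, and letting $\ep\to0$, yields the concavity. Swapping the roles of the players, Player $2$ splitting on his $L$-marginal at fixed conditionals gives the $L$-convexity of $\VV^-$, and the same two constructions applied to $\VV^+$ give its $K$-concavity and $L$-convexity; the only change is in the last step, since when the splitting player is the \emph{inner} player of the $\sup\inf$ (resp.\ $\inf\sup$) expression the pointwise bound above is unavailable and one concludes instead through the superadditivity $\inf_b\big(\la F_1(b)+(1-\la)F_2(b)\big)\geq\la\inf_bF_1+(1-\la)\inf_bF_2$ (resp.\ the corresponding subadditivity of $\sup$), the $\ep$-optimal strategies then being chosen as functions of the fixed strategy of the opponent.

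\emph{Lipschitz continuity.} For fixed controls $(\uuu,\vvv)$, Assumption \ref{ass1JD}$(a)$ and Gronwall's lemma give $\big|\xx[t_0,x_0,\uuu,\vvv](1)-\xx[t_0',x_0',\uuu,\vvv](1)\big|\leq C\big(|x_0-x_0'|+|t_0-t_0'|\big)$ uniformly in $(\uuu,\vvv)$; the $t_0$-dependence is handled by extending the controls to the shorter interval, on which the bounded drift moves the state by at most $\|f\|_{\infty}|t_0-t_0'|$. With the Lipschitz bound on the $g^{k\ell}$ from Assumption \ref{ass1JD}$(b)$, this makes $(t_0,x_0)\mapsto\EE^{\pi}_{\hat\al,\hat\be}\big[\,\cdots\,\big]$ Lipschitz uniformly in $(\hat\al,\hat\be,\pi)$, while the affineness of $\mu\mapsto\EE^{\mu}_{\hat\al,\hat\be}\big[\,\cdots\,\big]$, with coefficients bounded by $\max_{k,\ell}\|g^{k\ell}\|_{\infty}$, makes it Lipschitz in $\pi$ uniformly in $(\hat\al,\hat\be)$. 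Since $\big|\sup_a\inf_bF-\sup_a\inf_bG\big|\leq\sup_{a,b}|F-G|$, and likewise with $\sup$ and $\inf$ interchanged, these uniform estimates pass to $\VV^-$ and $\VV^+$, giving joint Lipschitz continuity on $[0,1]\times\RR^n\times\De(K\times L)$.

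\emph{Main obstacle.} Everything is routine except the one point proper to the dependent case: in the splitting the auxiliary index $j$ must be drawn from the \emph{posterior} $\PP(j\mid k)$ and not from $(\la,1-\la)$, so that the conditional law of the opponent's type is preserved and the resulting inequality lands in the form required by the definitions of $K$-concavity and $L$-convexity. The remaining checks — that the constructed object is a genuine random-strategy vector, and the Lipschitz dependence on the initial time (where the strategy space itself varies, treated as in \cite{carda07}) — are standard.
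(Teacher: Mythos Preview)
Your proposal is correct. The paper itself omits the proof of Lemma~\ref{regg} entirely, merely pointing to \cite[Chapter~2]{sorin02} for the $K$-concavity, $L$-convexity and Lipschitz dependence on $\pi$, and to \cite{cardanotes} for the regularity in $(t,x)$; what you have written is precisely the standard splitting-plus-Gronwall argument those references contain, adapted to the dependent case via the posterior draw $\PP(j\mid k)=\la p^1(k)/p(k)$ that keeps the conditionals $Q(\cdot\mid k)$ fixed.
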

The $K$-concavity, $L$-convexity and the Lipschitz continuity with respect to $\pi\in \De(K\times L)$ is standard for games with incomplete information (see \cite[Chapter 2]{sorin02}). The proof is omitted, for there is nothing particular to our model. The regularity in $(t,x)\in [0,1]\times \RR^n$ follows from the assumptions on $f$, $\ga$ and $g$ (see, for instance, \cite{cardanotes}).
\subsection{Sub-dynamic programming principle}
\begin{proposition}\label{subprogr'}
For all $(\zeta,Q)\in \RR^K\times \De(L)^K$,
 $t,t+h\in [0,1]$ and $x\in \RR^n$: 
\begin{equation*}\label{subd2}\VV_K^{+\sharp}(t,{x},\zeta,Q)\leq {}{} \inf_{\beta\in \B(t)}\sup_{\al\in \A(t)} \VV_K^{+\sharp}(t+h,{\xx}[t,{x},\al,\beta](t+h),\zeta,Q).
\end{equation*}
\end{proposition}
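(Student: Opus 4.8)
The strategy is to interpret $\VV_K^{+\sharp}(t,x,\zeta,Q)$ as the value of an auxiliary one-player (or zero-sum) game with a splitting at the initial time, and then run a standard dynamic-programming / subadditivity argument on that reformulation. The key reformulation, which is a general fact about normal-form games with incomplete information and is exactly what Step 2 of the proof outline alludes to, is that
\[
\VV_K^{+\sharp}(t,x,\zeta,Q)=\inf_{\widehat{\beta}\in\B_r(t)^L}\ \sup_{k\in K}\ \sup_{\al\in\A_r(t)}\ \Bigl(\EE^{\delta_k\otimes Q}_{\al,\widehat{\beta}}\bigl[g^{k\ell}(\XX_1^{t,x,\al,\widehat\beta^\ell}(\om))\bigr]-\zeta_k\Bigr),
\]
i.e. the Fenchel transform in $p$ of the concave function $p\mapsto\VV_K^+(t,x,p,Q)$ ``linearizes'' the informed player's problem: instead of choosing a distribution $p$ over $K$ and then a type-dependent strategy, the informed player may as well reveal that he will play as if his type were the worst-case $k$ for the dual variable $\zeta$. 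I would first establish this identity carefully (using Lemma \ref{fenchi} together with the concavity from Lemma \ref{regg}), since all subsequent steps are purely formal manipulations of it.

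Given the identity, the proof proceeds as in the classical independent case. Fix $(\zeta,Q)$, fix $t$ and $h$ with $t,t+h\in[0,1]$, and fix $\beta\in\B(t)$ and $\al\in\A(t)$ (deterministic strategies on $[t,1]$). The point is a concatenation argument: any strategy of a player on $[t,1]$ can be decomposed into its restriction to $[t,t+h]$ and, conditionally on the observed controls on $[t,t+h]$, a strategy on $[t+h,1]$ — this is possible precisely because strategies here are defined via a partition of the time interval (one may refine the partition so that $t+h$ is a partition point). Using Lemma \ref{fixrandom} to produce the unique pair of controls on $[t,t+h]$ induced by $(\al,\beta)$, and the flow property of the ODE (the trajectory from $t$ to $1$ through $t+h$ equals the trajectory restarted at $t+h$ from $\xx[t,x,\al,\beta](t+h)$), I would show that the ``tail'' problem starting at time $t+h$ from the position $\xx[t,x,\al,\beta](t+h)$ has value exactly $\VV_K^{+\sharp}(t+h,\xx[t,x,\al,\beta](t+h),\zeta,Q)$ — here the crucial observation is that the dual variables $(\zeta,Q)$ are \emph{not} updated, because in the dual formulation above the informed player has already committed (at the initial time $t$) to the worst-case type, and no Bayesian updating of $Q$ or $\zeta$ occurs along the play. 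Taking $\inf_\beta\sup_\al$ on the right and using that the informed/uninformed players can do at least as well by playing optimally on $[t,t+h]$ and then optimally on the tail yields the claimed inequality
\[
\VV_K^{+\sharp}(t,x,\zeta,Q)\leq\inf_{\beta\in\B(t)}\sup_{\al\in\A(t)}\VV_K^{+\sharp}\bigl(t+h,\xx[t,x,\al,\beta](t+h),\zeta,Q\bigr).
\]
A minor technical point to handle is that the $\sup$ over strategies in the reformulation is over \emph{random} strategies while the displayed inequality is stated with \emph{deterministic} $\A(t),\B(t)$; this is reconciled by the standard fact (already used implicitly in the model, cf. the remark $\U(t_0)\subset\A(t_0)\subset\A_r(t_0)$) that the value is unchanged whether one optimizes over deterministic or random strategies, plus the measurable-selection/approximation needed to pass from the ``for all $\al$'' in the definition to a genuine supremum.

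The main obstacle, and the part deserving the most care, is the reformulation identity in the first paragraph together with the claim that \emph{no updating of the dual parameter occurs}: one must argue that the Fenchel dual of the informed player's value is itself the value of a well-posed game in which the informed player's initial ``splitting'' is replaced by a deterministic commitment, and that the dynamic-programming operator for this dual game is the naive one (no belief update). This is where the general duality theory for incomplete-information games (De Meyer \cite{dm96}, and its type-dependent extension \cite{GOB13}) is invoked; concretely I would cite or adapt the alternative expression for $\VV_K^{+\sharp}$ from that theory, and then the concatenation/flow argument is routine. The regularity from Lemma \ref{regg} (Lipschitz continuity, hence finiteness and continuity of all the conjugates involved) is used throughout to ensure every supremum/infimum is finite and the manipulations are legitimate.
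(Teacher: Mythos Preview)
Your overall approach is the same as the paper's: start from the dual reformulation of $\VV_K^{+\sharp}$ and run a concatenation argument. However, the step you label a ``minor technical point'' is in fact where the real work sits, and your proposed resolution of it is incorrect.

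First, the passage from $\al\in\A_r(t)$ to $\al\in\A(t)$ is \emph{not} a consequence of any general principle that ``the value is unchanged whether one optimizes over deterministic or random strategies''. In the paper this reduction is obtained by the specific observation that $z\mapsto\max_{k\in K}z^k$ is convex, so that for any random $\al$ one has
\[
\max_k\int_{\Om_\al}\EE^{k,Q}_{\hat\beta}[\,\cdots\,]\,d\PP_\al-\zeta^k\ \leq\ \int_{\Om_\al}\max_k\bigl(\EE^{k,Q}_{\hat\beta}[\,\cdots\,]-\zeta^k\bigr)\,d\PP_\al\ \leq\ \sup_{\om_1}\max_k(\cdots),
\]
whence $\sup_{\al\in\A_r}=\sup_{\al\in\A}$ in the dual expression. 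This convexity argument is what makes the dual game ``one-sided'' in the informed player's variable; without it your reduction is unjustified.

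Second, and more seriously, player~2 is \emph{never} reduced to deterministic strategies in the dual formulation, and the inequality is not obtained by such a reduction. The paper instead \emph{constructs} an explicit random strategy $\hat\beta_\ep\in\B_r(t)^L$ for player~2: play a fixed deterministic $\beta_0\in\B(t)$ (near-optimal for the right-hand side) on $[t,t+h]$, then, depending on which cell $E_i$ of a finite $\de$-net the trajectory $\xx[t,x,\al,\beta_0](t+h)$ lands in, switch to a random type-dependent strategy $\hat\beta_i\in\B_r(t+h)^L$ that is near-optimal for $\VV_K^{+\sharp}(t+h,x_i,\zeta,Q)$. The covering by finitely many $E_i$ is essential: the tail strategy must depend on the terminal position, which in turn depends on player~1's (unknown) choice $\al$, and one needs both measurability of the resulting $\hat\beta_\ep$ and uniform $\ep$-optimality across the continuum of reachable points. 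The Lipschitz continuity in $x$ of the map $x\mapsto\sup_\al\max_k(\cdots)$ is used precisely here. Your sketch (``any strategy can be decomposed into its restriction and a tail'') describes the \emph{direction} of the flow argument but omits this construction entirely; as written, your argument would need player~2 to select, measurably in the terminal position, an $\ep$-optimal random strategy from an uncountable family, which is exactly what the covering avoids.

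In short: the high-level plan is right, but the reduction $\A_r\to\A$ needs the convexity argument, and the inequality for player~2 needs the explicit concatenated construction with a finite cover, not a nonexistent ``random $=$ deterministic'' principle.
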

\begin{proof}
Consider the following alternative expression for $\VV_K^{+\sharp}(t,{x},\zeta,Q)$, which is a general property of
normal-form games with convex sets of strategies
(see \cite[Chapter 2]{sorin02}):
\begin{equation}\label{alt}\VV_K^{+\sharp}(t,{x},\zeta,Q)= {}{}
 \inf_{\hat{\beta}\in \B_r(t)^L}\sup_{\al\in \A_r(t)} \max_{k\in K}
 \EE^{k,Q}_{\al, \hat{\beta}} \bigg[ g^{k\ell}\big(\xx[t,x,\al(\om),\be(\om)](1) \big) \bigg]-\zeta^k. 
\end{equation}

Recall that $\om=(\om_1,\om_2)\in \Om_\al\times \Om_\be$.
Using the convexity of the map $z\mapsto \max_{k\in K} z^k$, one can replace the random strategy of player $1$ by  a deterministic one.  Indeed, for any random strategy $\al\in \A_r(t)$, one has
\begin{eqnarray*}
  & \max_{k\in K} \EE^{k,Q}_{\al,\hat{\beta}}\bigg[g^{k\ell}\big(\xx[t,x,\al(\om),\be(\om)](1)\big)\bigg]-\zeta^k \\
 =& \max_{k\in K} \int_{\Om_\al} \EE^{k,Q}_{\hat{\beta}}\bigg[g^{k\ell}\big(\xx[t,x,\al(\om_1),\be(\om_2)](1)\big)\bigg]d\PP_\al(\om_1)-\zeta^k, \\
 \leq &
 \int_{\Om_\al} \max_{k\in K}\left( \EE^{k,Q}_{\hat{\beta}}
 \bigg[g^{k\ell}\big(\xx[t,x,\al(\om_1),\be(\om_2)](1)\big)\bigg]-\zeta^k\right) d\PP_\al(\om_1),\\
 \leq &
 \sup_{\om_1\in \Om_\al} \max_{k\in K}\EE^{k,Q}_{\hat{\beta}}
  \bigg[g^{k\ell}\big(\xx[t,x,\al(\om_1),\be(\om_2)](1)\big)\bigg]-\zeta^k.
\end{eqnarray*}
On the other hand, clearly $\sup_{\al\in \A_r(t)}\sup_{\om_1\in \Om_\al} =\sup_{\al\in \A(t)}$. 
It follows that 
\begin{equation}\label{alt3}\VV_K^{+\sharp}(t,{x},\zeta,Q)\leq {}{} \inf_{\hat{\beta}\in \B_r(t)^L}
\sup_{\al\in \A(t)} \max_{k\in K} \EE^{k,Q}_{\hat{\beta}}
\bigg[g^{k\ell}\big(\xx[t,x,\al,\be(\om_2)](1)\big)\bigg]-\zeta^k.
\end{equation}
For any $\hat{\be}\in \B_r(t)$, the regularity assumptions on $f$ and $g$ ensure the Lipschitz-continuity of the map
$$x\mapsto \sup_{\al\in A(t)} \max_{k\in K} \EE^{k,Q}_{\hat{\beta}}
\bigg[g^{k\ell}\big(\xx[t,x,\al,\be(\om_2)](1)\big)\bigg]-\zeta^k.$$
\begin{notation} We say that $\hat{\be}$ is  $\ep$-optimal for
$\VV_K^{+\sharp}(t,x,\zeta,Q)$ if it reaches the infimum in the formulation \eqref{alt3} up to $\ep$.
\end{notation}

For any $\ep>0$, let $\de>0$ be such that if $\hat{\beta}^y$ is $\ep$-optimal for
$\VV_K^{+\sharp}(t,y,\zeta,Q)$, for some $y\in \RR^n$, then $\hat{\beta}^y$ is $2\ep$-optimal for $\VV_K^{+\sharp}(t,y',\zeta,Q)$
for any $y'\in B(y,\de)$.

The set of reachable points at time $t+h$ is clearly
contained in $B(x,\|f\|)$. Let $(x_i)_{i \in I}$ be a finite family of points
such that $\bigcup_{i\in I}B(x_i,\de)$ covers $B(x,\|f\|)$, and let $E_i$ be a Borel
partition of $B(x,\|f\|)$ such that, for all $i \in I$, $E_i\subset B(x_i,\de)$.
We aim at proving \eqref{subd2} by explicitly constructing a strategy $\hat{\be}_\ep\in \B_r(t)$ for player $2$ which is $\ep$-optimal in \eqref{alt3}.
Intuitively, the strategy is as follows:
\begin{itemize}
\item[-] Play $\be_0\in \B(t)$ on $[t,t+h]$, where
$\be_0$ is $\ep$-optimal in the right-hand-side of \eqref{subd2}.
\item[-] If $\xx[t,x,\al,\be](t+h)$ belongs to
$E_i$, then play in the remaining of the game $[t+h,1]$
a strategy $\hat{\be}_{i}\in \B_r(t+h)$ which is $\ep$-optimal for $\VV_K^{+\sharp}(t+h,x_i,\zeta,Q)$.
\end{itemize} 
Let us define $\hat{\be}_\ep$ formally. For $i\in I$ and $\ell\in L$, let $(\Om_i^\ell,\F_i^\ell,\PP_i^\ell)=(\Om_{\hat{\be}_{i}^\ell},\F_{\hat{\be}_{i}^\ell},\PP_{\hat{\be}_{i}^\ell})$ be the probability space associated to $\hat{\beta}^\ell_i$,  and let $t+h=s_0<s_1<\dots <s_N=1$ be a common partition to all $\hat{\be}^\ell_i$. This is possible because $I\times L$ is finite.  For any $\ell\in L$, let
$$(\Om^\ell, \F^\ell,\PP^\ell)=\left(\prod\nolimits_{i\in I} \Om_i^\ell,
\otimes_{i\in I}\F_i^\ell,\otimes_{i\in I}\PP_i^\ell\right),$$
which belongs to $\mathcal{S}$. It is the probability space associated to $\hat{\be}_\ep^\ell$. Now, for any $\om^\ell=(\om^\ell_i)_{i\in I}\in \Om^\ell$ and $\uuu\in\U(t)$, let
\begin{equation*}
\hat{\beta}_\ep^\ell(\om^\ell, \uuu)(s)=
\begin{cases}
\be_{0}(\uuu)(s), & \text{ if } s\in [t,t+h],\\
\be^\ell_i(\om_i^\ell,\uuu'(s)), & \text{ if } s\in [t+h,1], \ \text{and} \ \xx[t,x,\uuu,\be_0](t+h)\in E_i,
\end{cases}
\end{equation*}
where $\uuu'$ denotes the restriction of $\uuu$ to $[t+h,1]$. Let $\al\in \A(t)$ be a strategy of player $1$, and let $(\uuu_0,\vvv_0) $ be (the unique pair) such that $\al(\vvv_0)=\uuu_0$ and $\be_0(\uuu_0)=\vvv_0$ a.e. on $[t,t+h]$. For any $\vvv'\in \V(t+h)$, let $\vvv' \circ \vvv_0\in \V(t)$ be the control obtained by concatenating $\vvv_0$ and $\vvv'$ at time $t+h$. Define $\al'\in \A(t+h)$ by the relation $\al'(\vvv'):=\al(\vvv'\circ \vvv_0)$. Now, by the choice of $\hat{\beta}_\ep$, the following relation holds
for any  $(k,\ell)\in K\times L$ and $\om^\ell\in \Om^\ell$:
\begin{equation}\label{rece}
g^{k\ell} (\xx[t,x,\al,\hat{\be}_\ep^\ell(\om^\ell)](1)=\sum_{i\in I}
g^{k\ell} \left(\xx\big[t+h,\XX_{t+h}^{t,x,\al,\be_0}, \al',\hat{\be}_i^\ell(\om^\ell_i)\big](1)\right)
\ind_{F_i},
\end{equation}
where $\XX_{t+h}^{t,x,\al,\be_0}=\xx[t,x,\al,\beta_0](t+h)$ and $F_i=\{\XX_{t+h}^{t,x,\al,\be_0}\in E_i\}$. \\
We claim that
\begin{equation}\label{ccclaim}\max_{k\in K}\EE^{k,Q}_{\hat{\beta}_\ep}
\bigg[g^{k\ell}\big(\xx[t,x,\al,\be(\om_2)](1)\big)\bigg]-\zeta^k \leq
\VV_K^{+\sharp}(t+h,\XX_{t+h}^{t,x,\al,\be_0},\zeta,Q)+2\ep.
\end{equation}
\emph{Proof of the claim.}
From \eqref{rece}, one deduces the
following equality:
$$\max_{k\in K}\EE^{k,Q}_{\hat{\beta}_\ep}
\bigg[g^{k\ell}\big(\xx[t,x,\al,\be(\om_2)](1)\big)\bigg]-\zeta^k=\quad\quad\quad\quad \quad  $$
\begin{equation}\label{dr1}
 \max_{k\in K}
 \sum_{i\in I}
g^{k\ell} \left(\xx\big[t+h,\XX_{t+h}^{t,x,\al,\be_0},\al', \hat{\be}_i^\ell(\om^\ell_i)\big](1)\right)
\ind_{F_i}-\zeta^k.
\end{equation}
The convexity of $z\mapsto \max_{k\in K}{z^k}$ implies that \eqref{dr1} is smaller than:
\begin{equation}\label{dr2}
  \sum_{i\in I} \sup_{\al'\in \A(t+h)}
\max_{k\in K}\bigg( g^{k\ell} \left(\xx\big[t+h,\XX_{t+h}^{t,x,\al,\be_0},\al', \hat{\be}_i^\ell(\om^\ell_i)\big](1)\right)-\zeta^k\bigg)\ind_{F_i}.
\end{equation}
By the choice of $\hat{\beta}_i$, \eqref{dr2} is smaller than
\begin{equation}\label{dr3}
 \sum_{i\in I} \bigg( \VV_K^{+\sharp}\big(t+h,\XX_{t+h}^{t,x,\al,\be_0},\zeta,Q\big)+2\ep\bigg) \ind_{F_i},
\end{equation}
which is equal to $\VV_K^{+\sharp}(t+h,\XX_{t+h}^{t,x,\al,\be_0},\zeta,Q)+2\ep$, so that the claim follows. \\
Taking the $\inf_{\hat{\be}\in \B_r(t)}\sup_{\al\in \A(t)}$ in both sides of \eqref{ccclaim}, one obtains
\begin{eqnarray*}\label{ccclaim2}\inf_{\hat{\be}\in \B_r(t)}\sup_{\al\in \A(t)}\max_{k\in K}\EE^{k,Q}_{\hat{\beta}_\ep}
\bigg[g^{k\ell}\big(\xx[t,x,\al,\be(\om_2)](1)\big)\bigg]-\zeta^k &\leq&
\sup_{\al\in \A(t)}
\VV_K^{+\sharp}(t+h,\XX_{t+h}^{t,x,\al,\be_0},\zeta,Q)+2\ep,\\
&\leq & \inf_{\be\in \B(t)}\sup_{\al\in \A(t)} \VV^{+\sharp}_K\big(t+h,\XX_{t+h}^{t,x,\al,\be},\zeta,Q\big)+3\ep.
\end{eqnarray*}
The first inequality holds because the right-hand-side of \eqref{ccclaim} does not depend on $\hat{\be}$.
The second, by the choice of $\be_0$. Together with \eqref{alt3}, these inequalities complete the proof.
\end{proof}
It is well-known that a function satisfying a sub-dynamic programming principle is a sub-solution
of the associated Hamilton-Jacobi equation.  we refer the reader to \cite{ES84} (resp. \cite{carda07})
for the case where the game is played with classical non-anticipative strategies (resp. non-anticipative with delay, or
discretized strategies).
\begin{corollaire} \label{corol1}For any $(\zeta,Q)\in \RR^K\times \De(L)^K$, the map $(t,x)\mapsto \VV_K^{+\sharp}(t,x,\zeta,Q)$ is a sub-solution of \eqref{HJI}.
\end{corollaire}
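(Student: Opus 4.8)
The plan is to derive Corollary \ref{corol1} directly from the sub-dynamic programming principle of Proposition \ref{subprogr'}, following the by-now standard argument that a function satisfying a sub-DPP is a viscosity sub-solution of the associated Hamilton--Jacobi equation. Fix $(\zeta,Q)\in\RR^K\times\De(L)^K$ and write $w(t,x):=\VV_K^{+\sharp}(t,x,\zeta,Q)$. By Lemma \ref{regg} and the fact that the upper conjugate of a Lipschitz function is Lipschitz, $w$ is Lipschitz continuous on $[0,1]\times\RR^n$, hence in particular upper-semi-continuous, which is the first requirement in the definition of a sub-solution. It remains to verify the test-function inequality at an interior point.

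So let $\phi\in\mathcal{C}^1([0,1]\times\RR^n;\RR)$ be such that $w-\phi$ has a local maximum at $(t,x)\in(0,1)\times\RR^n$; without loss of generality the maximum value is $0$, so $w(s,y)\le\phi(s,y)$ for $(s,y)$ near $(t,x)$ and $w(t,x)=\phi(t,x)$. First I would apply Proposition \ref{subprogr'} with initial time $t$ and horizon $h>0$ small: there exists $\beta_h\in\B(t)$ such that for every $\al\in\A(t)$ one has $w(t,x)\le w(t+h,\xx[t,x,\al,\beta_h](t+h))+\ep_h$, where $\ep_h$ can be absorbed into the estimates (one can equivalently keep the $\inf_\beta$ outside). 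For $h$ small enough the reachable point $\xx[t,x,\al,\beta_h](t+h)$ stays in the neighbourhood where $w\le\phi$, so $w(t,x)\le\phi(t+h,\xx[t,x,\al,\beta_h](t+h))+\ep_h$ for all $\al$. Using $\phi(t,x)=w(t,x)$ and a first-order Taylor expansion of $\phi$ along the trajectory $\xx$, together with the dynamics $\dot{\xx}=f(s,\xx,\uuu,\vvv)$, this yields, after dividing by $h$ and letting $h\to 0$, an inequality of the form $0\le \partial_t\phi(t,x)+\sup_{\al}\big(\text{averaged }\langle f,D\phi(t,x)\rangle\big)$. The optimization over $\al\in\A(t)$ of the infinitesimal term $\langle f(t,x,u,v),D\phi(t,x)\rangle$ against the $\inf$ over $\beta$ produces precisely $H^+(t,x,D\phi(t,x))$, which equals $H(t,x,D\phi(t,x))$ under Isaacs' condition; so one obtains $\partial_t\phi(t,x)+H(t,x,D\phi(t,x))\ge 0$, as required.

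The main obstacle is the passage from the sub-DPP to the infinitesimal inequality: one must show that $\lim_{h\to0}\tfrac1h\inf_{\beta\in\B(t)}\sup_{\al\in\A(t)}\big[\phi(t+h,\xx[t,x,\al,\beta](t+h))-\phi(t,x)\big]=\partial_t\phi(t,x)+H^+(t,x,D\phi(t,x))$. This is exactly the point where the regularity in Assumption \ref{ass1JD} (uniform boundedness and Lipschitz continuity of $f$, continuity in $(u,v)$) and Isaacs' condition are used, and it is the content of the classical lemmas of Evans--Souganidis \cite{ES84} (for non-anticipative strategies) and Cardaliaguet \cite{carda07} (for the discretized / non-anticipative-with-delay strategies $\A(t),\B(t)$ used here). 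Since the argument is identical to theirs once the sub-DPP of Proposition \ref{subprogr'} is in hand, I would not reproduce the full calculation but simply invoke these references, as indicated in the sentence preceding the corollary. The symmetric statement for $\VV_L^{-\flat}$ being a super-solution follows by exchanging the roles of the two players and replacing $\sup\inf$ by $\inf\sup$ throughout.
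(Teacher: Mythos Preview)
Your proposal is correct and follows exactly the approach of the paper: the corollary is obtained from the sub-dynamic programming principle of Proposition \ref{subprogr'} via the classical argument that a sub-DPP yields a viscosity sub-solution, with the infinitesimal passage handled by the references \cite{ES84} and \cite{carda07}. The paper is in fact even more terse---it simply invokes those references without sketching the test-function computation---so your write-up is a faithful (and slightly more detailed) rendering of the intended proof.
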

Reversing the roles of the players, one obtains the following result in the same manner.
\begin{corollaire} \label{corol2}For any $(P, \eta)\in \De(K)^L\times \RR^L$, the map $(t,x)\mapsto \VV_L^{-\flat}(t,x,P,\eta)$ is a super-solution of \eqref{HJI}.
\end{corollaire}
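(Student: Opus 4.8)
The plan is to prove Corollary \ref{corol2} by a symmetry argument, mirroring the passage from Proposition \ref{subprogr'} to Corollary \ref{corol1}, with the roles of the two players interchanged. First I would establish the corresponding \emph{super-dynamic programming principle}: for all $(P,\eta)\in\De(K)^L\times\RR^L$, all $t,t+h\in[0,1]$ and all $x\in\RR^n$,
\begin{equation*}
\VV_L^{-\flat}(t,x,P,\eta)\ \geq\ \sup_{\al\in\A(t)}\inf_{\be\in\B(t)}\VV_L^{-\flat}\big(t+h,\xx[t,x,\al,\be](t+h),P,\eta\big).
\end{equation*}
To do this one starts from the alternative expression for $\VV_L^{-\flat}$ analogous to \eqref{alt}, namely
\begin{equation*}
\VV_L^{-\flat}(t,x,P,\eta)=\sup_{\hat\al\in\A_r(t)^K}\inf_{\be\in\B_r(t)}\min_{\ell\in L}\EE^{P,\ell}_{\hat\al,\be}\big[g^{k\ell}(\xx[t,x,\al(\om),\be(\om)](1))\big]+\eta^\ell,
\end{equation*}
which, exactly as before, is a general property of normal-form games with incomplete information and convex strategy sets (\cite[Chapter 2]{sorin02}). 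Using the \emph{concavity} of $z\mapsto\min_{\ell\in L}z^\ell$ one replaces the random strategy of player $2$ by a deterministic one, obtaining the analogue of \eqref{alt3}.

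Next I would construct, for any $\ep>0$, an $\ep$-optimal strategy $\hat\al_\ep\in\A_r(t)^K$ for player $1$ by concatenation: play an $\ep$-optimal $\al_0\in\A(t)$ of the right-hand side of the super-dynamic programming principle on $[t,t+h]$, then, according to which cell $E_i$ of a fine Borel partition of the reachable set $B(x,\|f\|)$ the point $\xx[t,x,\al,\be](t+h)$ lands in, switch to a strategy $\hat\al_i\in\A_r(t+h)^K$ that is $\ep$-optimal for $\VV_L^{-\flat}(t+h,x_i,P,\eta)$; here one uses the Lipschitz continuity in $x$ (Lemma \ref{regg}, via the regularity assumptions on $f$ and $g$) to pass from $x_i$ to any $y'\in B(x_i,\de)$ at the cost of $\ep$. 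The decomposition formula \eqref{rece}, with the roles of $\al$ and $\be$ swapped, then gives an equality analogous to \eqref{dr1}; applying the concavity of $z\mapsto\min_\ell z^\ell$ (which reverses the inequality directions compared to the proof of Proposition \ref{subprogr'}) yields the lower bound corresponding to \eqref{ccclaim}, and taking $\sup_{\hat\al}\inf_\be$ finishes the super-dynamic programming principle.

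Finally I would invoke the standard fact — recorded in the remark preceding Corollary \ref{corol1} and proved in \cite{ES84,carda07} — that a lower-semi-continuous function satisfying such a super-dynamic programming principle is a viscosity super-solution of the associated Hamilton-Jacobi-Isaacs equation \eqref{HJI}; combined with the Lipschitz continuity of $\VV_L^{-\flat}$ in $(t,x)$ (again Lemma \ref{regg}), this gives that $(t,x)\mapsto\VV_L^{-\flat}(t,x,P,\eta)$ is a super-solution of \eqref{HJI}, which is the claim. The only genuinely delicate point, as in Proposition \ref{subprogr'}, is the bookkeeping of the concatenated strategy and its probability space and the verification of the measurability needed for the expectations; everything else is a mechanical transcription of the proof of Corollary \ref{corol1} with $(\A,K,\sharp,\sup\inf,\max,\le)$ replaced by $(\B,L,\flat,\inf\sup$ reversed to $\sup\inf,\min,\ge)$, so I would simply indicate these substitutions rather than repeat the argument in full.
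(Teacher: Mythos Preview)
Your proposal is correct and follows exactly the route the paper indicates: the paper's entire proof of Corollary~\ref{corol2} is the one sentence ``Reversing the roles of the players, one obtains the following result in the same manner,'' and what you have written is precisely the unpacking of that sentence --- the dual alternative expression, the concavity of $z\mapsto\min_\ell z^\ell$, the concatenated $\ep$-optimal strategy for player~1, and the passage from the super-dynamic programming principle to the super-solution property via \cite{ES84,carda07}.
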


\subsection{Comparison principle}
Consider some general Hamilton-Jacobi equation, i.e. not necessarily depending on $f$:
\begin{equation}\label{genHJIJD}\tag{H}
\partial_t w(t,x)+H(t,x,Dw(t,x))=0, \quad \text{on } (0,1)\times \RR^n.
\end{equation}
Suppose that the Hamiltonian $H:[0,1]\times \RR^n\times \RR^n\to \RR$
is continuous and such that,
for all $t,s \in [0,1]$ and $x,y,\xi\in \RR^n$:
$$|H(t,x,\xi)-H(s,y,\xi)| \leq C\|\xi\|(|t-s|+\|x-y\|), \quad \text{for some }C\geq 0.$$

\begin{theoreme}[Comparison principle]\label{newcp'} Let $w^1, w^2: [0,1]\times \RR^{n}\times \De(K\times L)\to \RR$ be two Lipschitz continuous functions satisfying:
\begin{itemize}\item[-]  $w^1_K,\, w^2_K$ are concave on $\De(K)$ and $w^1_L,\, w^2_L$ are convex on $\De(L)$,
\item[-] For all $(\zeta,Q)\in \RR^K \times \De(L)^K$,
$(t,x)\mapsto w_K^{2\, \sharp}(t,x,\zeta,Q)$ is a sub-solution of \eqref{genHJIJD},
\item[-] For all $(P,\eta)\in \De(K)^L\times \RR^L$,
$(t,x)\mapsto w_L^{1 \, \flat}(t,x,P,\eta)$ is a super-solution of \eqref{genHJIJD},
\item[-]  $w^1(1,x,\pi)\geq w^2(1,x,\pi)$, for all $(x,\pi)\in \RR^N\times \De(K\times L)$.
\end{itemize}
Then $w^1(t,x,\pi)\geq w^2(t,x,\pi)$, for all $(t,x,\pi)\in [0,1]\times \RR^n\times \De(K \times L)$.
\end{theoreme}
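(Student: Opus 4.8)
Fix $\bar\pi\in\De(K\times L)$ and write $\bar Q:=\bar\pi^{L|K}\in\De(L)^K$, $\bar P:=\bar\pi^{K|L}\in\De(K)^L$; by the very definition of these conditionals one has the identity $\bar\pi^K\otimes\bar Q=\bar\pi=\bar\pi^L\otimes\bar P$, which is the only place where the dependent case enters. Let $C_0$ be a common Lipschitz constant of $w^1,w^2$ in $\pi$ (for the $\ell^1$ norm). Since $p\mapsto w_K^2(t,x,p,\bar Q)$ is concave and $C_0$-Lipschitz on $\De(K)$ while $q\mapsto w_L^1(t,x,\bar P,q)$ is convex and $C_0$-Lipschitz on $\De(L)$, standard convex duality (biconjugation) together with this identity gives, for every $(t,x)$,
\begin{align*}
w^2(t,x,\bar\pi)&=\min_{\zeta\in\mathcal C_K}\big[w_K^{2\,\sharp}(t,x,\zeta,\bar Q)+\langle\zeta,\bar\pi^K\rangle\big],\\
w^1(t,x,\bar\pi)&=\max_{\eta\in\mathcal C_L}\big[w_L^{1\,\flat}(t,x,\bar P,\eta)-\langle\eta,\bar\pi^L\rangle\big],
\end{align*}
where $\mathcal C_K:=\{\zeta\in\RR^K:\|\zeta\|_\infty\le C_0\}$ and $\mathcal C_L$ is defined similarly (both compact and convex); the extrema are attained because $w_K^{2\,\sharp}$ and $w_L^{1\,\flat}$, being conjugates of equi-Lipschitz families, are Lipschitz in $(t,x)$. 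Hence it suffices to prove: for every $(t,x)$,
$$\inf_{(\zeta,\eta)\in\mathcal C_K\times\mathcal C_L}\big[w_K^{2\,\sharp}(t,x,\zeta,\bar Q)-w_L^{1\,\flat}(t,x,\bar P,\eta)+\langle\zeta,\bar\pi^K\rangle+\langle\eta,\bar\pi^L\rangle\big]\;\le\;0.$$

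\textbf{The doubling argument.} Recall the available ingredients: $(t,x)\mapsto w_K^{2\,\sharp}(t,x,\zeta,\bar Q)$ is a sub-solution of \eqref{genHJIJD} for each $\zeta\in\mathcal C_K$, $(t,x)\mapsto w_L^{1\,\flat}(t,x,\bar P,\eta)$ is a super-solution for each $\eta\in\mathcal C_L$, and $H$ obeys $|H(t,x,\xi)-H(s,y,\xi)|\le C\|\xi\|(|t-s|+\|x-y\|)$; the last property gives finite speed of propagation, which (with a standard confining term if the $w^i$ are unbounded) lets one localize to a compact set of states. Argue by contradiction: suppose that for some small $\theta>0$ the number $M:=\sup_{t,x}\big(w^2(t,x,\bar\pi)-w^1(t,x,\bar\pi)-\theta(1-t)\big)$ is $>0$, and for $\epsilon>0$ consider
$$\Theta_\epsilon:=\sup_{\substack{t,s\in[0,1]\\ x,y}}\ \inf_{(\zeta,\eta)\in\mathcal C_K\times\mathcal C_L}\Big[w_K^{2\,\sharp}(t,x,\zeta,\bar Q)+\langle\zeta,\bar\pi^K\rangle-w_L^{1\,\flat}(s,y,\bar P,\eta)+\langle\eta,\bar\pi^L\rangle-\tfrac{\|x-y\|^2+|t-s|^2}{2\epsilon}-\theta(1-t)\Big].$$
Testing on the diagonal $t=s,\ x=y$ and using the biconjugation formulas gives $\Theta_\epsilon\ge M>0$, and the supremum is attained at some $(t_\epsilon,s_\epsilon,x_\epsilon,y_\epsilon)$ with $\|x_\epsilon-y_\epsilon\|,\,|t_\epsilon-s_\epsilon|=O(\sqrt\epsilon)$ and vanishing penalty terms. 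If $t_\epsilon=1$: the bracket is the sum of a $\zeta$-part and an $\eta$-part, so the inner infimum equals, by biconjugation, $w^2(1,x_\epsilon,\bar\pi)-w^1(s_\epsilon,y_\epsilon,\bar\pi)$ minus the (nonnegative) penalty, which by the terminal inequality and the Lipschitz continuity of $w^1$ is $\le w^2(1,x_\epsilon,\bar\pi)-w^1(1,x_\epsilon,\bar\pi)+o(1)\le o(1)$ — contradicting $\Theta_\epsilon\ge M$ for $\epsilon$ small. If $t_\epsilon<1$: one uses Sion's minimax theorem to interchange the infimum over $(\zeta,\eta)$ with the supremum over states — legitimate after the usual relaxation of the supremum over states into a supremum over probability measures on the compact state set, since the bracket is convex (an affine term plus two convex suprema) in $(\zeta,\eta)\in\mathcal C_K\times\mathcal C_L$ and the resulting integral is affine, hence concave, in the measure. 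One thereby fixes a \emph{single} pair $(\zeta^\star,\eta^\star)$ for which $\sup_{\mathrm{states}}=\inf\sup$ is still realized at $(t_\epsilon,s_\epsilon,x_\epsilon,y_\epsilon)$; the penalized maximum is then a genuine two‑variable extremum of the sub‑solution $w_K^{2\,\sharp}(\cdot,\cdot,\zeta^\star,\bar Q)$ against the super‑solution $w_L^{1\,\flat}(\cdot,\cdot,\bar P,\eta^\star)$, and the classical Crandall--Ishii computation — writing the viscosity inequalities with the test function from the penalty, subtracting, and absorbing the $\|x_\epsilon-y_\epsilon\|/\epsilon$ terms via the bound on $H$ — yields $\theta\le o(1)$, again a contradiction. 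Hence $M\le 0$; letting $\theta\to0$ gives $w^1(\cdot,\cdot,\bar\pi)\ge w^2(\cdot,\cdot,\bar\pi)$, and since $\bar\pi$ was arbitrary, $w^1\ge w^2$ everywhere.

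\textbf{Main obstacle.} The genuinely delicate point is the interchange of the infimum over the dual variables $(\zeta,\eta)$ with the supremum over states inside the penalized functional: one must set things up — via the relaxation to probability measures described above — so that Sion's theorem applies and delivers a \emph{single} pair $(\zeta^\star,\eta^\star)$ to which the classical comparison machinery can be applied. Everything else is either the Fenchel duality already recorded (in particular the identity $\bar\pi^K\otimes\bar Q=\bar\pi=\bar\pi^L\otimes\bar P$, which is all that separates the dependent from the independent case), the routine viscosity‑solution bookkeeping of \cite{carda07,ES84}, or the regularity of the Hamiltonian, which is exactly what Assumption~\ref{ass1JD} secures; for these parts I would refer to \cite{carda07}.
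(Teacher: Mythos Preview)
Your approach is genuinely different from the paper's, and the step you flag as the ``main obstacle'' is indeed where the argument breaks. Relaxing the supremum over states to probability measures and applying Sion yields
\[
\sup_{\mu}\ \inf_{(\zeta,\eta)}\int F\,d\mu \;=\; \inf_{(\zeta,\eta)}\ \sup_{z} F,
\]
but \emph{not} $\sup_{z}\inf_{(\zeta,\eta)} F=\inf_{(\zeta,\eta)}\sup_{z} F$: the saddle point you obtain is $(\mu^\star,(\zeta^\star,\eta^\star))$ with $\mu^\star$ a genuine mixture in general (the map $\mu\mapsto\inf_{(\zeta,\eta)}\int F\,d\mu$ is concave, not affine), so there is no reason the original maximizer $(t_\epsilon,s_\epsilon,x_\epsilon,y_\epsilon)$ of $\inf_{(\zeta,\eta)}F$ should also maximize $F(\cdot,\zeta^\star,\eta^\star)$. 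One could instead take any maximizer $z^\star_\epsilon$ of $F(\cdot,\zeta^\star,\eta^\star)$ and run the Crandall--Ishii computation there, but then the boundary case $t^\star_\epsilon=1$ can no longer be excluded by your biconjugation argument: once $(\zeta^\star,\eta^\star)$ is fixed you only have the Fenchel--Young inequality $w_K^{2\,\sharp}(1,x,\zeta^\star,\bar Q)+\langle\zeta^\star,\bar\pi^K\rangle\ge w^2(1,x,\bar\pi)$, which goes the wrong way for an upper bound.

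The paper sidesteps this entirely by a different device: it keeps $\pi$ as a \emph{free variable} in the doubled functional, so the penalized supremum is taken over $(t,x,s,y,\pi)$. At the maximizer $(t_\epsilon,x_\epsilon,s_\epsilon,y_\epsilon,\pi_\epsilon)$, the first-order optimality condition in $q$ (with $P_\epsilon:=\pi_\epsilon^{K|L}$ frozen) forces any subgradient $\eta_\epsilon\in\partial^-_q w^2_L(s_\epsilon,y_\epsilon,P_\epsilon,q_\epsilon)$ to lie also in $\partial^-_q w^1_L(t_\epsilon,x_\epsilon,P_\epsilon,q_\epsilon)$; the Fenchel equalities (Lemma~\ref{fenchi}) then convert both sides to $w_L^{1\,\flat}$ and $w_L^{2\,\flat}$ at the \emph{same} dual variable $-\eta_\epsilon$, producing a clean $\mathcal C^1$ test function for the super-solution at $(t_\epsilon,x_\epsilon)$. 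Symmetrically one obtains $\zeta_\epsilon$. Thus the dual variables are selected by the optimality condition in $\pi$ rather than by a minimax interchange, and no Sion-type argument is needed.
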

This statement extends \cite[Theorem 5.1]{carda07} to the case of general type dependence.
The proof follows the same lines and is inspired in the proof of the Comparison Principle by Bardi and Capuzzo-Dolcetta \cite[Theorem 3.7]{BCD97}.
\begin{proof}
Suppose, on the contrary, the existence of some $(t',x', \pi')\in [0,1]\times \RR^n\times \De(K\times L)$ such that
$w^2(t',x',\pi')>w^1(t',x',\pi')$. Then, for some $\si >0$,
\[ \sup_{(t,x,\pi)}w^2(t,x,\pi)-w^1(t,x,\pi)-\si(1-t)>0,
\]
where the supremum is taken over $[0,1]\times \RR^n\times \De(K\times L)$.
Let us consider the standard method of separation of variables. To avoid technical details, we will assume that for some $R>0$,  $w^1(t,x,\pi)\geq w^2(t,x,\pi)$ for all $(t,x,\pi)$ with $\|x\|>R$. This assumption can be omitted by using penalization arguments at infinity (see \cite{BCD97}). Let $\ep>0$ be fixed and consider the following map:
\begin{equation}\label{esd1}
 (t,x,s,y,\pi)\mapsto w^2(s,y,\pi)-w^1(t,x,\pi)-\frac{1}{\ep}\|(s,y)-(t,x)\|^2-\si(1-t)>0.
\end{equation}
It attains its maximum at some point, denoted by $(t_\ep,x_\ep,s_\ep,y_\ep,\pi_\ep)$.
Let $p_\ep\in \De(K),\, q_\ep\in \De(L), \, Q_\ep\in \De(L)^K,\, P_\ep\in \De(K)^L$ b such that
$$\pi_\ep=p_\ep \otimes Q_\ep=q_\ep\otimes P_\ep.$$ From usual arguments (see \cite{BCD97}), $t_\ep,s_\ep<1$ for small $\ep$, because $w_2(1,y,\pi)-w_1(1,x,\pi)\leq 0$ and $w^1$ and $w^2$ are Lipschitz continuous. Moreover,
\[ \lim_{\ep \to 0^+}\frac{1}{\ep}\|(s_\ep,y_\ep)-(t_\ep,x_\ep)\|^2=0.
\]
\textbf{Part 1.} Here, we use that $w_L^{1\, \flat}$ is a super-solution to \eqref{genHJIJD}.\\
Fix $(s,y,P)=(s_\ep,y_\ep,P_\ep)$. Then, $(t_\ep,x_\ep,q_\ep)$ is a maximizer in \eqref{esd1} so that for all $(t,x,q)\in [0,1]\times \RR^n\times \De(L)$,
\begin{equation}\label{conj'}
 w_L^1(t,x,P_\ep,q)\geq w^1_L(t_\ep,x_\ep,P_\ep,q_\ep)+w^2_L(s_\ep,y_\ep,P_\ep,q)-w^2_L(s_\ep,y_\ep,P_\ep,q_\ep)+\varphi(t,x),
\end{equation}
where $\varphi(t,x)=\varphi_{(t_\ep,x_\ep,s_\ep,y_\ep)}(t,x)$ is defined as follows:
 \[\varphi(t,x):= \frac{1}{\ep}\big(\|(s_\ep,y_\ep)-(t_\ep,x_\ep)\|^2-
\| (s_\ep,y_\ep)-(t,x)\|^2 \big)+\si(t-t_\ep).\]
In particular, putting $(t,x)=(t_\ep,x_\ep)$ and because $\varphi(t_\ep,x_\ep)=0$, one has:
\begin{equation}\label{subs}
 w^1_L(t_\ep,x_\ep,P_\ep,q_\ep)-w^1_L(t_\ep,x_\ep,P_\ep,q)\leq w^2_L(s_\ep,y_\ep,P_\ep,q_\ep)-w^2_L(s_\ep,y_\ep,P_\ep,q).
\end{equation}
Let $\eta_\ep$ be in the sub-differential of the convex function
$q\mapsto w^2_L(s_\ep,y_\ep,P_\ep,q)$ at $q_\ep$, i.e.:
 $$w^2_L(s_\ep,y_\ep,P_\ep,q_\ep)+\langle \eta_\ep,q-q_\ep\rangle\leq  w^2_L(s_\ep,y_\ep,P_\ep,q), \ \forall q\in \De(L).$$
Then, by \eqref{subs},
$\eta_\ep\in \partial^-_q  w^1_L(t_\ep,x_\ep,P_\ep,q_\ep)$ too.
This implies (see Lemma \ref{fenchi}) that:
\begin{equation}\label{conj2'}
 \begin{aligned}
 w_L^{1\, \flat}(t_\ep,x_\ep,P_\ep,-\eta_\ep)&=&
  w^1_L(t_\ep,x_\ep,P_\ep,q_\ep)+ \langle -\eta_\ep, q_\ep\rangle,\\
w_L^{2\, \flat}(s_\ep,y_\ep,P_\ep,-\eta_\ep)&=&   w^2_L(s_\ep,y_\ep,P_\ep,q_\ep)+\langle -\eta_\ep, q_\ep\rangle.
\end{aligned}
\end{equation}
Taking the lower conjugate of \eqref{conj'} 
at $-\eta_\ep$, one obtains that
\begin{equation}
  w_L^{1\, \flat}(t,x,P_\ep,-\eta_\ep)\geq w^1_L(t_\ep,x_\ep,P_\ep,q_\ep)+ w_L^{2\, \flat}(s_\ep,y_\ep,P_\ep,-\eta_\ep)-w^2_L(s_\ep,y_\ep,P_\ep,q_\ep)+\varphi(t,x).
\end{equation}
 Using the equalities in \eqref{conj2'}, we obtain that
\begin{equation}
 w_L^{1\, \flat}(t,x,P_\ep,-\eta_\ep)\geq  w_L^{1\, \flat}(t_\ep,x_\ep,P_\ep,-\eta_\ep)+\varphi(t,x),
\end{equation}
with an equality at $(t,x)=(t_\ep,x_\ep)$. The right-hand-side is a $\mathcal{C}^1$-function, and can be taken as a test function. Its derivatives are clearly those of
$\varphi$, i.e.:
$$ \partial_t \varphi (t_\ep,x_\ep)=\si+\frac{2}{\ep}(s_\ep-t_\ep),$$
$$ D \varphi (t_\ep,x_\ep)=\frac{2}{\ep}(y_\ep-x_\ep).$$
Finally, $(t,x)\mapsto w_L^{1\,\flat}(t,x,P_\ep,-\eta_\ep)$ being a super-solution of \eqref{genHJIJD}, one has:
\begin{equation}\label{aa1'}
\si+\frac{2}{\ep}(s_\ep-t_\ep)+H(t_\ep,x_\ep,\frac{2}{\ep}(y_\ep-x_\ep))\leq 0.
\end{equation}
\textbf{Part 2.} Symmetrically, we use here that $w_K^{2\, \sharp}$ is a sub-solution to \eqref{genHJIJD}. We fix
$(t,x,Q)=(t_\ep,x_\ep,Q_\ep)$ and let $(s_\ep,y_\ep,p_\ep)$ be a maximizer in \eqref{esd1}.
As in Part 1, one obtains
\begin{equation*}
 w_K^{2\, \sharp}(s,y,\zeta_\ep,Q_\ep)\leq
 w_K^{2\, \sharp}(s_\ep,y_\ep,\zeta_\ep,Q_\ep )+\phi(s,y),
\end{equation*}
with an equality at $(s_\ep,y_\ep)$, and where $\phi$ is 
a $\mathcal{C}^1$ and satisfies: 
$$ \partial_t \phi (t_\ep,x_\ep)=\frac{2}{\ep}(s_\ep-t_\ep),$$
$$ D \phi (t_\ep,x_\ep)=\frac{2}{\ep}(y_\ep-x_\ep).$$
The mapping $(t,x)\mapsto w_K^{2\, \sharp}(t,x, \zeta_\ep,Q_\ep)$ being a sub-solution of \eqref{genHJIJD}, and using $\phi$ as a test function one obtains:
\begin{equation}\label{aa2'}
\frac{2}{\ep}(s_\ep-t_\ep)+H(s_\ep,y_\ep,\frac{2}{\ep}(y_\ep-x_\ep))\geq 0.
\end{equation}
\textbf{Conclusion.}
Subtracting
\eqref{aa1'} and \eqref{aa2'} yields:
\begin{equation}
H(t_\ep,x_\ep,\frac{2}{\ep}(y_\ep-x_\ep))-H(s_\ep,y_\ep,\frac{2}{\ep}(y_\ep-x_\ep))\leq -\si.
\end{equation}
Using the assumptions on the Hamiltonian, one gets a contradiction by letting $\ep\to 0$.
\end{proof}
\begin{proof}[\textbf{Proof of Theorem \ref{main1}}] $\phantom{Helena}$\\
Let $w^1=\VV^-$ and $w^2=\VV^+$. These  functions satisfy the assumptions of the comparison principle (Theorem \ref{newcp'}). Indeed, we already noticed that the concavity-convexity is a general property for games with incomplete information. Corollary \ref{corol1} (resp. \ref{corol2}) gives the second (resp. third) assumption. Finally, by definition, one has
$$\VV^-(1,x,\pi)=\VV^+(1,x,\pi)=\sum_{(k,\ell)\in K\times L}\pi^{k\ell} g^{k\ell}(x), \quad \forall (x,\pi)\in \RR^n\times \De(K\times L).$$
The fact that the Hamiltonian associated to the game $H=H^+=H^-$ satisfies the assumptions in the comparison principle, follows from the Assumption \ref{ass1JD} and from Cauchy-Schwartz inequality. Indeed, there exists $c\geq 0$ such that,
for any $(u,v)\in U\times V$, $s,t\in [0,1]$, $x,y,\xi,\in \RR^n$:
\begin{equation}\label{ineq}
\begin{array}{l c l}
\left | \langle f(t,x,u,v),\xi\rangle -\langle f(s,y,u,v),\xi\rangle \right |&\leq&
\|\xi\| \| f(t,x,u,v)-f(s,y,u,v)\|,\\&\leq &
\|\xi\| c\left (|t-s|+\| x-y\|\right).
\end{array}
\end{equation}
A  classical $\sup \inf$ argument gives then
$$\left | H(t,x,\xi)-H(t,y,\xi)\right | \leq
\|\xi\| c\left (|t-s|+\| x-y\|\right).$$
Theorem \ref{newcp'} applies, so that $\VV^-\geq \VV^+$.
\end{proof}

\subsection{Application to repeated games}
In this paragraph, we deduce from Theorem \ref{main1} a new characterization for the asymptotic value of repeated games with incomplete information.

Let 
$I$ and $J$ be two finite sets, let $U:=\De(I)$ and $V:=\De(J)$ stand for the corresponding simplexes and let $G^{k\ell}:I\times J\to \RR$ be a matrix game for each $(k,\ell)\in K\times L$. Let $\pi\in \De(K\times L)$ be an initial probability and let $\theta\in \De(\NN^*)$ be a measure giving the weight of each stage. 
A repeated game with incomplete information is played as follows: 
\begin{itemize}
\item[-]
First, a pair of signals $(k,\ell)$ is drawn according to $\pi$; player $1$ is informed of $k$, player $2$ of $\ell$.
\item[-] Then, at every stage $m\geq 1$, knowing the past actions, the players choose actions $(i_m,j_m)\in I\times J$.
\end{itemize}
Player $1$ maximizes $\sum_{m\geq 1}\theta_m G^{k\ell}(i_m,j_m)$. The existence of the value $v_\theta(\pi)$ is straightforward. The convergence of $v_\theta(\pi)$, as $\|\theta\|:=\max_{m\geq 1}\theta_m$ tends to $0$, was established by Mertens and Zamir \cite{MZ71}, for the two classical evaluations $\theta_m=\frac{1}{n}\ind_{m\geq n}$ and $\theta_m=\la(1-\la)^{m-1}$, extended to a general evaluation by Cardaliaguet, Laraki and Sorin \cite{CLS11}, in the independent case, and then to the general case by Oliu-Barton \cite[Section 5.4]{OB13}.
The limit, denoted by $v(\, \cdot \,)$, is the
unique solution of the Mertens and Zamir \cite{MZ71} system of functional equations on $\De(K\times L)$: 
\begin{equation}\label{MZ2}\tag{MZ}
\begin{cases}
\begin{array}{r c l} w(\pi)&=&\mathrm{Cav}_{\De(K)}\min\{u_K,w_K\}(\pi^K,\pi^{L|K})\\
w(\pi)&=&\mathrm{Vex}_{\De(L)}\max\{u_L,w_L\}(\pi^L,\pi^{K|L})
\end{array} \end{cases}
\end{equation}
where $u:\De(K\times L)\to \RR$ is the value of the \emph{average} (or non-revealing) game:  $$u(\pi)=\mathrm{val}_{(u,v)\in U\times V} \sum_{(k,\ell)\in K\times L} \pi^{k\ell} G^{k\ell}(u,v),$$
which exists by the minmax theorem. Here, $G^{k\ell}$ is bi-linearly extended to $U\times V$.

Consider now a natural continuous-time analog of the game we just described:
\begin{itemize}
\item[-] First, a pair of signals $(k,\ell)$ is drawn according to $\pi\in \De(K\times L)$, player $1$ is informed of $k$, player $2$ of $\ell$.
\item[-] Then, the players play a differential game with 
initial time $t_0$, running payoff $G^{k\ell}$ and no dynamic, i.e. there is no state variable.
\end{itemize}
By stationarity (i.e. there is no dependence on $t$) 
one can assume w.l.o.g. that $t_0=0$ and thus use the shorter notation
$\A$,  $\A_r$, $\B$ and $\B_r$ for $\A(0)$, $\A_r(0)$, $\B(0)$ and $\B_r(0)$ respectively.
The value exists thanks to Theorem \ref{main1}:
$$\WW(\pi):=\mathrm{val}_{(\al,\be)\in \A_r^K\times \B_r^L} \EE_{\hat{\al},\hat{\beta}}^{\pi}\left[ \int_{0}^{1} G^{k\ell}(\uuu_\om(s),\vvv_\om(s))ds\right].$$

The characterization of $\WW$ will yield a new characterization of
$v(\, \cdot \,)$, together with the equality $\WW(\pi)=v(\pi)$ for all $\pi\in \De(K\times L)$.
Start by using he reduction of Section \ref{reduc} (see the Appendix). Define an auxiliary game with initial state $x_0\in \RR^{K\times L}$, dynamic $f(t,x,u,v):=(G^{k\ell}(u,v))_{k\ell}$, no running payoff and terminal payoff functions $g^{k\ell}(x)=x^{k\ell}$, for each $(k,\ell)\in K\times L$. The state accounts  for the cumulated payoff in each one of the $K\times L$ coordinates:
$$x_t=\int_{t_0}^t  G(\uuu(s),\vvv(s))ds\in \RR^{K\times L},$$
and the terminal payoff is simply the cumulated payoff corresponding to the true parameters. 
 This game has a value $\VV(t_0,x_0,\pi)$ by Theorem \ref{main1}. The following properties are straightforward consequences of the stationarity of the model: for all $(t,x,\pi)\in [0,1]\times \RR^{K\times L}\times \De(K\times L)$,
\begin{equation}\label{properties}
\begin{array}{r c l}
\VV(t,x,\pi)&=&\VV(t,0,\pi)+\langle x,\pi\rangle,\\
\VV(t,x,\pi)&=&(1-t)\VV(0,x,\pi),\\
\VV(0,0,\pi)&=&\WW(\pi).
\end{array}
\end{equation}
The next result is a consequence  of the the sub-dynamic programming principle (Proposition  \ref{subprogr'}), the characterization in Theorem \ref{main1} and the equalities \eqref{properties}.
\begin{proposition}\label{caracbis}
The map $\WW:\De(K\times L)\to \RR$
is the unique $K$-concave and $L$-convex Lipschitz continuous
function satisfying: 
\begin{enumerate}
\item[$(1)$] For all $Q\in \De(L)^K$, $\zeta\mapsto \WW_K^\sharp(\zeta, Q)$ is a sub-solution of:
$$-w(\zeta)+\langle D w(\zeta),\zeta\rangle+u_K(-Dw(\zeta),Q)\geq 0, \quad \text{on } \ \RR^K.$$
\item[$(2)$]  For all $P\in \De(K)^L$, $\eta\mapsto \WW_L^\flat(P,\eta)$ is a super-solution of:
$$-w(\eta)+\langle D w(\eta),\eta\rangle +u_L( P,Dw(\eta))\leq 0, \quad \text{on } \ \RR^L.$$
\end{enumerate}
\end{proposition}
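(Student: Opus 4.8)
\textbf{Proof plan for Proposition \ref{caracbis}.}

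The plan is to transfer the characterization of $\VV$ from Theorem \ref{main1} to $\WW$ via the structural identities \eqref{properties}, and separately to identify the Hamilton--Jacobi equation \eqref{HJI} for this specific dynamic with the reduced equation in the statement. First I would compute the Hamiltonian: since $f(t,x,u,v)=(G^{k\ell}(u,v))_{k\ell}$ does not depend on $(t,x)$, and since $U=\De(I)$, $V=\De(J)$ with $G^{k\ell}$ bilinear, Isaacs' condition holds by the minmax theorem and $H(\xi)=\val_{(u,v)}\sum_{k\ell}\xi^{k\ell}G^{k\ell}(u,v)$ for $\xi\in\RR^{K\times L}$. The key observation is that this is exactly the value $u(\cdot)$ of the non-revealing game evaluated at $\xi$ (when $\xi\in\De(K\times L)$), and by homogeneity $H(\xi)=\|\xi\| \cdot u(\xi/\|\xi\|)$ extends the definition; in particular, for $\xi=p\otimes Q$ one has $H(\xi)=u_K(p,Q)$ up to the appropriate scaling, which is what produces the term $u_K(-Dw,Q)$ below.

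Next I would exploit \eqref{properties}. Writing $\VV(t,x,\pi)=(1-t)\WW(\pi)+\langle x,\pi\rangle$, one computes $\partial_t\VV=-\WW(\pi)$ and $D_x\VV=\pi$, and more importantly, taking the upper conjugate in the $K$-direction: since $\VV_K(t,x,p,Q)=(1-t)\WW_K(p,Q)+\langle x, p\otimes Q\rangle$, a short calculation with the definition of $\sharp$ gives $\VV_K^\sharp(t,x,\zeta,Q)=(1-t)\WW_K^\sharp\big((\zeta - x\cdot(\text{something in }Q))/(1-t), Q\big)$ — the point being that $\VV_K^\sharp(t,x,\cdot,Q)$ is, after an affine change of the conjugate variable and a multiplicative factor $(1-t)$, essentially $\WW_K^\sharp(\cdot,Q)$. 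Feeding the fact (Corollary \ref{corol1}) that $(t,x)\mapsto\VV_K^\sharp(t,x,\zeta,Q)$ is a viscosity sub-solution of \eqref{HJI} into this identity, and using that composition with smooth affine changes of variables and multiplication by the positive factor $(1-t)$ transport viscosity sub-solutions appropriately, one extracts that $\zeta\mapsto\WW_K^\sharp(\zeta,Q)$ is a sub-solution of the stationary equation $-w(\zeta)+\langle Dw(\zeta),\zeta\rangle+u_K(-Dw(\zeta),Q)\geq 0$; the symmetric manipulation with Corollary \ref{corol2} and the $\flat$-conjugate yields $(2)$. The $K$-concavity, $L$-convexity and Lipschitz continuity of $\WW$ are inherited directly from Lemma \ref{regg} applied to $\VV(0,0,\cdot)$.

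For uniqueness I would run the argument in reverse: given any $K$-concave, $L$-convex, Lipschitz $W:\De(K\times L)\to\RR$ satisfying $(1)$ and $(2)$, define $\widetilde{V}(t,x,\pi):=(1-t)W(\pi)+\langle x,\pi\rangle$ and check that $\widetilde V$ satisfies all five conditions of Theorem \ref{main1} for the auxiliary game — the terminal condition $\widetilde V(1,x,\pi)=\langle x,\pi\rangle = \sum\pi^{k\ell}x^{k\ell}$ being immediate from the form of $g^{k\ell}$ — so $\widetilde V=\VV$ and hence $W=\VV(0,0,\cdot)=\WW$. The main obstacle is the bookkeeping in the two conjugation-plus-change-of-variables steps: one must verify carefully that the affine substitution $\zeta\mapsto\zeta+(\text{linear in }x)$ together with the factor $(1-t)$ converts the $(t,x)$-viscosity sub-solution property of $\VV_K^\sharp$ into precisely the stated stationary inequality with the $-w+\langle Dw,\zeta\rangle$ structure — this is where the non-stationary Hamilton--Jacobi equation collapses to the stationary one, and it requires matching the $\partial_t$ term (which becomes $-W$) against the Legendre-type term $-w+\langle Dw,\zeta\rangle$ coming from the conjugation. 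Everything else is routine propagation of convex-analytic identities.
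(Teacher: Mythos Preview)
Your plan is correct and follows the paper's proof closely: both use the structural identity \eqref{properties2} relating $\VV_K^\sharp$ to $\WW_K^\sharp$, and both derive the regularity of $\WW$ from that of $\VV$. The two minor differences are worth noting. For (1)--(2), the paper plugs \eqref{properties2} into the sub-dynamic programming principle (Proposition~\ref{subprogr'}) at $(t,x)=(0,0)$ to obtain an explicit sub-dynamic programming inequality \eqref{subprogrw} for $\WW_K^\sharp$ itself, and then invokes ``classical arguments'' to pass to the viscosity inequality; you instead transport the viscosity sub-solution property of Corollary~\ref{corol1} directly through the formula. Your route is legitimate --- given a test function $\phi$ for $\WW_K^\sharp(\cdot,Q)$ at $\zeta_0$, the function $\psi(t,x)=(1-t)\phi\big((\zeta-x(Q))/(1-t)\big)$ with $\zeta$ chosen so that the argument equals $\zeta_0$ at some interior $(t_0,x_0)$ is an admissible test function for $\VV_K^\sharp$, and the derivative computation yields exactly the inequality in (1); the surjectivity of $x\mapsto x(Q)$ onto $\RR^K$ is what makes this lift work. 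For uniqueness, the paper appeals to a ``standard comparison principle'' for the stationary equations, whereas you lift any candidate $W$ to $\widetilde V(t,x,\pi)=(1-t)W(\pi)+\langle x,\pi\rangle$ and invoke the uniqueness part of Theorem~\ref{main1}; your reduction is arguably more self-contained since it avoids stating a new comparison result.
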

\begin{proof}
The $K$-concavity, $L$-convexity and Lipschitz continuity of $\WW$ is a direct consequence of Theorem \ref{main1} and the relation $\VV(0,0,\pi)=\WW(\pi)$.
Now, it follows from \eqref{properties} that, for all $(t,x,\zeta,Q)\in [0,1)\times \RR^n\times \RR^K\times \De(L)^K$,
\begin{equation}\label{properties2}
\VV_K^{\sharp}(t,x,\zeta,Q)=(1-t)\WW_K^\sharp\left(\frac{\zeta-x(Q)}{1-t}, Q\right),
\end{equation} 
where $x^k(Q)=\sum_{\ell\in L}Q(\ell|k)x^{k\ell}$, for all $k\in K$.
Applying Proposition \ref{subprogr'} at $(t,x)=(0,0)$, one obtains that, for all $(\zeta,Q)\in \RR^K\times \De(L)^K$ and $h\in [0,1)$,
\begin{equation}\label{subprogrw}
\WW_K^\sharp(\zeta,Q)\leq (1-h){}{} \inf_{\be\in \B}\sup_{\al\in \A} \WW_K^\sharp\left(\frac{\zeta-G^Q_h(\al,\be)}{1-h}, Q\right),
\end{equation}
where $G^Q_h(\al,\be)\in \RR^K$, 
$$(G^Q_h(\al,\be))^k =  \int_0^h \sum\nolimits_{\ell\in L} Q(\ell|k)G^{k\ell}(\al(s),\be(s))ds.$$ 
By classical arguments, the super-dynamic principle \eqref{subprogrw} implies $(1)$.
Reversing the roles of the players, one obtains $(2)$.
Finally, uniqueness follows from a standard comparison principle. 
\end{proof}
It is somehow deceiving to characterize $\WW$ in terms of its upper and lower conjugates. The next results shows how, from Proposition \ref{caracbis}, one can recover properties on the original function.
\begin{corollaire}\label{corol} Let $Q_0\in \De(L)^K$ be fixed, and suppose that $\zeta\mapsto \WW_K^\sharp(\zeta, Q_0)$ is differentiable at $\zeta_0\in \RR^K$. Then,
$$\WW(p_0\otimes Q_0)\leq u(p_0\otimes Q_0), \quad \text{where}\quad p_0:=- D \WW_K^\sharp(\zeta_0,Q_0)\in \De(K).$$
\end{corollaire}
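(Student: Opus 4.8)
The plan is to read off the claimed inequality from Proposition~\ref{caracbis}$(1)$ by exploiting the differentiability of $\zeta\mapsto\WW_K^\sharp(\zeta,Q_0)$ at $\zeta_0$. Since $\WW_K^\sharp(\,\cdot\,,Q_0)$ is a convex function (being a supremum of affine functions, as an upper conjugate) that is differentiable at $\zeta_0$, the sub-solution property at the interior point $\zeta_0$ becomes a genuine classical inequality: one may take the affine test function $\phi(\zeta)=\WW_K^\sharp(\zeta_0,Q_0)+\langle D\WW_K^\sharp(\zeta_0,Q_0),\zeta-\zeta_0\rangle$, which touches $\WW_K^\sharp(\,\cdot\,,Q_0)$ from below at $\zeta_0$ (by convexity) — wait, for a sub-solution we need a test function touching from above, so instead I will argue directly: differentiability at $\zeta_0$ forces every element of the super-differential (in the viscosity sense, test functions $\geq$ the function near $\zeta_0$) to have gradient $D\WW_K^\sharp(\zeta_0,Q_0)$, and a standard fact for first-order HJ equations is that at a point of differentiability any continuous sub-solution satisfies the equation with the classical gradient. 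Hence, writing $D_0:=D\WW_K^\sharp(\zeta_0,Q_0)$, the inequality in Proposition~\ref{caracbis}$(1)$ at $\zeta_0$ reads
\begin{equation*}
-\WW_K^\sharp(\zeta_0,Q_0)+\langle D_0,\zeta_0\rangle+u_K(-D_0,Q_0)\geq 0.
\end{equation*}

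Next I set $p_0:=-D_0$ and invoke Lemma~\ref{envelope}: differentiability of $\WW_K^\sharp(\,\cdot\,,Q_0)$ at $\zeta_0$ is precisely equivalent to $p_0=-D_0\in\De(K)$ being an extreme point of $\WW_K(\,\cdot\,,Q_0)$ (here one uses that $\WW_K(\,\cdot\,,Q_0)$ is concave, from Proposition~\ref{caracbis}). In particular $p_0$ is a bona fide probability on $K$, so $p_0\otimes Q_0\in\De(K\times L)$ is well defined. By Lemma~\ref{fenchi} applied to the concave function $\varphi=\WW_K(\,\cdot\,,Q_0)$ with $\zeta_0$ a super-gradient at $p_0$ — indeed Fenchel equality gives $\WW_K^\sharp(\zeta_0,Q_0)=\WW_K(p_0,Q_0)-\langle\zeta_0,p_0\rangle$, i.e.\ $\WW_K^\sharp(\zeta_0,Q_0)=\WW(p_0\otimes Q_0)+\langle\zeta_0,p_0\rangle$. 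Substituting this into the displayed inequality and recalling $\langle D_0,\zeta_0\rangle=-\langle\zeta_0,p_0\rangle$, the two terms $\pm\langle\zeta_0,p_0\rangle$ cancel and one is left with
\begin{equation*}
-\WW(p_0\otimes Q_0)+u_K(p_0,Q_0)\geq 0,
\end{equation*}
which is exactly $\WW(p_0\otimes Q_0)\leq u(p_0\otimes Q_0)$ once one recalls the notation $u_K(p_0,Q_0)=u(p_0\otimes Q_0)$.

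The only genuinely delicate point is the first step: justifying that a viscosity sub-solution of the stationary first-order equation in Proposition~\ref{caracbis}$(1)$, at an interior point $\zeta_0$ of differentiability, satisfies the equation with the classical gradient. This is standard but worth stating carefully: if $w$ is a continuous sub-solution of $F(\zeta,w,Dw)\geq 0$ and $w$ is differentiable at $\zeta_0$, then $Dw(\zeta_0)$ belongs to the super-differential $\partial^+w(\zeta_0)$, and testing against any $\mathcal{C}^1$ function $\phi$ with $\phi\geq w$ near $\zeta_0$ and $\phi(\zeta_0)=w(\zeta_0)$ forces $D\phi(\zeta_0)=Dw(\zeta_0)$, whence $F(\zeta_0,w(\zeta_0),Dw(\zeta_0))\geq 0$. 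Here the ``state'' and ``time'' variables are absent (the equation lives on $\RR^K$), so no boundary issues arise and $\zeta_0\in\RR^K$ is automatically interior. Everything else — the Fenchel equality identification, the cancellation of scalar products, and the translation of $u_K$ notation — is bookkeeping that follows directly from the definitions and from Lemmas~\ref{fenchi} and \ref{envelope}.
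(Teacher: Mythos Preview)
Your proof is correct and follows essentially the same route as the paper: apply the sub-solution property of Proposition~\ref{caracbis}(1) at the point of differentiability $\zeta_0$ (so that the classical gradient $D_0=-p_0$ may be inserted), then use the Fenchel equality from Lemma~\ref{fenchi} to convert $\WW_K^\sharp(\zeta_0,Q_0)$ into $\WW_K(p_0,Q_0)-\langle\zeta_0,p_0\rangle$, whereupon the inner-product terms cancel. One cosmetic slip: in your ``i.e.'' restatement of the Fenchel equality you flipped the sign of the scalar product (it should remain $-\langle\zeta_0,p_0\rangle$, matching your first displayed version and the subsequent cancellation); the computation that follows is nonetheless correct.
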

\begin{proof}
Let $\phi$ be a test function at $\zeta_0$, i.e. $\phi(\zeta)\geq \WW_K^\sharp(\zeta, Q_0)$, for all $\zeta\in \RR^K$, with an equality at $\zeta_0$. The differentiability of
$\zeta\mapsto \WW_K^\sharp(\zeta, Q_0)$ at $\zeta_0$ implies then that
$D \phi(\zeta_0)=D \WW_K^\sharp(\zeta_0, Q_0)$.
Then, by Proposition \ref{caracbis} and the choice of $p_0$ one has:
\begin{equation}\label{vi}
 -\WW^\sharp_K(\zeta_0,Q_0)+\langle -p_0,\zeta_0 \rangle + u_K(p_0,Q_0)\geq 0.
\end{equation}
Finally, by Fenchel duality (see Lemma \ref{fenchi}) one has
$$\WW^\sharp_K(\zeta_0,Q_0)=-\WW_K(p_0,Q_0)+\langle p_0,\zeta_0 \rangle.$$
Replacing this expression in \eqref{vi} gives the desired result.
\end{proof}

We can now obtain the desired equality.

\begin{corollaire}\label{equal} $\WW(\pi)=v(\pi)$ for all $\pi\in \De(K\times L)$
\end{corollaire}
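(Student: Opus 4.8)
The plan is to show that $\WW$ satisfies the Mertens--Zamir system \eqref{MZ2}, since $v(\,\cdot\,)$ is characterized as its unique solution. So the task reduces to proving the two fixed-point equations
$$\WW(\pi)=\cav_{\De(K)}\min\{u_K,\WW_K\}(\pi^K,\pi^{L|K}) \quad\text{and}\quad \WW(\pi)=\vex_{\De(L)}\max\{u_L,\WW_L\}(\pi^L,\pi^{K|L}).$$
I will prove only the first; the second follows by the symmetric argument exchanging the roles of the players (and of $K$, $\sharp$, $\cav$ with $L$, $\flat$, $\vex$).

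First I would translate the desired equation into the dual (conjugate) language, where Proposition \ref{caracbis} and Corollary \ref{corol} live. Fix $Q\in\De(L)^K$. The operation $p\mapsto\cav_{\De(K)}\min\{u_K,\WW_K\}(p,Q)$ corresponds, after taking the upper conjugate $\sharp$ in the $p$-variable, to the operation that sends a function to (minus) the convex conjugate of $\min\{u_K(\cdot,Q),\WW_K(\cdot,Q)\}$; and the conjugate of a pointwise minimum of two concave functions is the pointwise \emph{maximum} of the two conjugates. Concretely, $\big(\cav\min\{u_K,\WW_K\}\big)^\sharp(\zeta,Q)=\max\{u_K^\sharp(\zeta,Q),\WW_K^\sharp(\zeta,Q)\}$. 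Since taking $\sharp$ in $p$ is a bijection between $K$-concave functions and their conjugates, the first Mertens--Zamir equation is equivalent to
$$\WW_K^\sharp(\zeta,Q)=\max\{u_K^\sharp(\zeta,Q),\WW_K^\sharp(\zeta,Q)\}\quad\text{for all }\zeta\in\RR^K,$$
i.e. to the inequality $\WW_K^\sharp(\zeta,Q)\ge u_K^\sharp(\zeta,Q)$, equivalently $\WW_K(\cdot,Q)\le\cav_{\De(K)}\min\{u_K(\cdot,Q),\WW_K(\cdot,Q)\}$... but this is not quite what I want on its own, so the cleaner route is: the first MZ equation says $\WW_K(\cdot,Q)$ equals the concave envelope of $\min\{u_K(\cdot,Q),\WW_K(\cdot,Q)\}$, which holds iff (a) $\WW_K(\cdot,Q)$ is concave (known, Lemma \ref{regg}), (b) $\WW_K(\cdot,Q)\le$ that envelope, and (c) $\WW_K(\cdot,Q)\ge\min\{u_K,\WW_K\}$ at every extreme point of $\WW_K(\cdot,Q)$ — because a concave function dominating a given function equals its concave envelope iff they agree on the extreme points of the former. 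Item (c) is exactly the content of Corollary \ref{corol}: at any extreme point $p_0$ of $\WW_K(\cdot,Q_0)$ (equivalently, by Lemma \ref{envelope}, a point of differentiability $\zeta_0$ of $\WW_K^\sharp(\cdot,Q_0)$ with $p_0=-D\WW_K^\sharp(\zeta_0,Q_0)$), we have $\WW(p_0\otimes Q_0)\le u(p_0\otimes Q_0)$, i.e. $\WW_K(p_0,Q_0)\le u_K(p_0,Q_0)$, hence $\WW_K(p_0,Q_0)=\min\{u_K,\WW_K\}(p_0,Q_0)$. By density/continuity this upgrades from points of differentiability to all extreme points. Item (b), the inequality $\WW_K\le\cav\min\{u_K,\WW_K\}$, I would get from the dual sub-solution property of Proposition \ref{caracbis}(1) by a direct comparison/viscosity argument, or equivalently from the fact that a player can always guarantee at least the non-revealing value $u$ on any splitting, so $\WW\le u$ is false but $\WW\le\cav(\text{something})$ — more precisely (b) follows because $\WW_K$ is concave and, by (1), is a viscosity sub-solution of the dual MZ inequality, which forces $\WW_K^\sharp\ge u_K^\sharp$ hence $\WW_K\le\cav\min\{u_K,\WW_K\}$; this last implication is the standard one-sided comparison already invoked at the end of the proof of Proposition \ref{caracbis}.

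Having established both MZ equations, I would finish by invoking uniqueness: $v(\,\cdot\,)$ is, by the cited results of Mertens--Zamir \cite{MZ71} (and its extensions \cite{CLS11,OB13}), the unique function on $\De(K\times L)$ solving \eqref{MZ2}; since $\WW$ is continuous (indeed Lipschitz), $K$-concave and $L$-convex, and solves \eqref{MZ2}, we conclude $\WW(\pi)=v(\pi)$ for all $\pi\in\De(K\times L)$.

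The main obstacle I anticipate is (b) together with passing from differentiability points to all extreme points in (c): Corollary \ref{corol} only gives the inequality at points where $\WW_K^\sharp(\cdot,Q_0)$ is differentiable, and one must argue that differentiability holds on a dense set (true, as $\WW_K^\sharp$ is convex hence differentiable a.e.) and that both sides are continuous, so the inequality $\WW_K\le\min\{u_K,\WW_K\}$ on extreme points extends appropriately; and one has to be careful that the concave envelope of $\min\{u_K,\WW_K\}$ is attained by splittings supported on extreme points of $\WW_K$ rather than of the minimum. Once those measure-theoretic/convex-analytic points are handled, the argument is essentially the classical Rosenberg--Sorin--Laraki derivation of the MZ equations from the dual characterization, now valid in the dependent case because every ingredient (Proposition \ref{caracbis}, Corollary \ref{corol}, Lemmas \ref{fenchi}, \ref{envelope}) has been set up at fixed conditional $Q$.
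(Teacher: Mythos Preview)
Your strategy is exactly the paper's: show that $\WW$ solves the Mertens--Zamir system and conclude by uniqueness. The paper does this in one stroke by invoking \cite[Lemma~4.35]{sorin02}, which states that a $K$-concave, $L$-convex continuous bounded function $f$ solves \eqref{MZ2} if and only if $f(p\otimes Q)\le u(p\otimes Q)$ for every $p\in\mathcal{E}_f(Q)$ and $f(q\otimes P)\ge u(q\otimes P)$ for every $q\in\mathcal{E}_f(P)$. This is precisely the convex-analytic fact you yourself formulate (``a concave function dominating a given function equals its concave envelope iff they agree on the extreme points of the former''). Corollary~\ref{corol} together with Lemma~\ref{envelope} then gives exactly these extreme-point inequalities.

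The one place you lose the thread is your item (b). It is \emph{not} a separate step requiring a viscosity or comparison argument: once (a) $\WW_K(\cdot,Q)$ is concave and (c) $\WW_K(p,Q)\le u_K(p,Q)$ at every extreme point $p$ of $\WW_K(\cdot,Q)$, the equality $\WW_K=\cav_{\De(K)}\min\{u_K,\WW_K\}$ is automatic. Indeed, writing any $p$ as a convex combination $\sum_i\lambda_i p_i$ of extreme points with $\WW_K(p,Q)=\sum_i\lambda_i\WW_K(p_i,Q)$, (c) gives $\WW_K(p_i,Q)=\min\{u_K,\WW_K\}(p_i,Q)$, hence $\WW_K(p,Q)\le\cav\min\{u_K,\WW_K\}(p,Q)$; the reverse inequality is trivial since $\WW_K\ge\min\{u_K,\WW_K\}$ and $\WW_K$ is concave. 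Your attempted detour through $\WW_K^\sharp\ge u_K^\sharp$ and Proposition~\ref{caracbis}(1) is therefore unnecessary (and, as written, not a proof).

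Your worry about passing from differentiability points of $\WW_K^\sharp(\cdot,Q)$ to extreme points of $\WW_K(\cdot,Q)$ is handled directly by Lemma~\ref{envelope}, which asserts the equivalence; the paper uses this without the density/continuity patch you propose. (There is a residual subtlety about exposed versus general extreme points that both you and the paper leave implicit.)
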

\begin{proof}
By Lemma \ref{envelope}, the differentiability of $\zeta\mapsto \WW_K^\sharp(\zeta, Q_0)$ at $\zeta_0\in \RR^K$ is equivalent to the fact that $p_0:=- D \WW_K^\sharp(\zeta_0,Q_0)\in \De(K)$ is an extreme point of $p\mapsto \WW_K(p, Q_0)$.
On the other hand, any $K$-concave, $L$-convex continuous, bounded function $f:\De(K\times L)\to \RR$ is a solution to \eqref{MZ2} if and only if (see \cite[Lemma 4.35]{sorin02})
\begin{equation*}
\begin{cases}
\begin{array}{r c l r} f(p\otimes Q)&\leq &u(p\otimes Q), & \forall p\in \mathcal{E}_f(Q), \\
f(q\otimes P)&\geq &u(q\otimes P), & \forall q\in \mathcal{E}_f(P),\end{array} \end{cases}
\end{equation*}
where $\mathcal{E}_f(Q)$ (resp.  $\mathcal{E}_f(P)$) is the set of extreme points of  $f_K(\, \cdot \, Q)$ (resp.
$f_L(\, \cdot \, P)$). By Corollary \ref{corol}, $\WW$ is thus a solution of the Mertens-Zamir system of functional equations. The solution being unique \cite[Theorem 2.1]{MZ71}, one obtains $\WW=v$.
\end{proof}

Corollary \ref{equal} and Proposition \ref{caracbis} provide a new  characterization for the asymptotic value $v(\, \cdot \,)$ of repeated games with incomplete information.


 \section*{Acknowledgements}
 The author is grateful to Sylvain Sorin for his reading and remarks.
 He is also very much indebted to Pierre Cardaliaguet and Catherine Rainer for
 their lectures on the subject, and their encouraging feedback.
The author is also very thankful to the two anonymous referees, whose comments have helped improving this paper.

\section*{Appendix}
Let us describe precisely the standard transformation from a Bolza to a Mayer problem, which allows to assume without loss of generality, that there is no running payoff.
The past controls being commonly observed,
both players can compute the $K\times L$ potential integral payoffs and positions induced by the pair $(\uuu,\vvv)\in \U(t_0)\times \V(t_0)$ at time $t\in [t_0,1]$: 
$$
\int_{t_0}^t \ga^{k\ell}(s,\xx^{k\ell}[t_0,x^{k\ell}_0,\uuu,\vvv](s),\uuu(s),\vvv(s))ds, \quad (k,\ell)\in K\times L$$
$$\xx^{k\ell}[t_0,x^{k\ell}_0,\uuu,\vvv](t), \quad (k,\ell)\in K\times L.
$$
Define a new state variable in $(\RR\times \RR^n)^{K\times L }$ which contains this information. Let the dynamic be given by:
\begin{eqnarray*}
F:[0,1]\times (\RR\times \RR^n)^{K\times L}\times U\times V&\to & (\RR\times \RR^n)^{K\times L},\\
F^{k\ell}\left(t,(y^{k\ell},x^{k\ell})_{(k,\ell)},u,v\right)&=&
\left(\ga^{k\ell} (t,x^{k\ell},u,v ), f^{k\ell}(t,x^{k\ell},u,v)\right). \phantom{\sum_p^t}
\end{eqnarray*}
Define new terminal payoff functions by setting, for each $(k,\ell)\in K\times L$, 
\begin{eqnarray*}
G^{k\ell}:(\RR\times \RR^n)^{K\times L}&\to & \RR,\\
G^{k\ell}((y^{k,\ell},x^{k\ell})_{(k,\ell)})&=& y^{k\ell}+g^{k\ell}(x^{k\ell}). \phantom{\sum_p^t}
\end{eqnarray*}
Let $N=(1+n)|K| |L|$ and let $X_0:=(0,x_0^{k\ell})_{(k,\ell)\in K\times L}\in \RR^N$.
The regularity of $f$, $\ga$ and $g$ ensures that $F$ and $G^{k\ell}$ satisfy Assumption \ref{ass1JD},
for each $(k,\ell)$. Define an auxiliary differential game with asymmetric information 
as follows:
before the game starts, $(k,\ell)\in K\times L$ is drawn according to $\pi$; $k$ (resp. $\ell$) is told to player $1$ (resp. $2$). Then,
the standard differential game
$$(X_0,F,0,(G^{k\ell})_{k,\ell})$$
is played. This game is equivalent to $\G(t_0,\pi)$ since any couple of  controls $(\uuu,\vvv)\in \U(t_0)\times \V(t_0)$ induces the same payoff in both games.

In view of this reduction, with no loss of generality we may focus on games satisfying $(a)$ and $(b)$.
The following assumption holds in the rest of the paper.

\begin{ass}\label{ass3JD} There exists $z\in \RR^n$ and $\phi:[0,1]\times \RR^n \times U\times V\to \RR^n$ such that
$x_0^{k\ell}=z$ and $f^{k\ell}=\phi$, for all $(k,\ell)\in K\times L$. Moreover, $\ga\equiv 0$.
 \end{ass}

  \bibliographystyle{amsplain}
 \bibliography{bibliothese2}

\providecommand{\bysame}{\leavevmode\hbox to3em{\hrulefill}\thinspace}
\providecommand{\MR}{\relax\ifhmode\unskip\space\fi MR }
\providecommand{\MRhref}[2]{%
  \href{http://www.ams.org/mathscinet-getitem?mr=#1}{#2}
}
\providecommand{\href}[2]{#2}
\begin{thebibliography}{10}

\bibitem{AM95}
R.J. Aumann and M.~Maschler, \emph{Repeated games with incomplete information,
  \emph{with the collaboration of R. Stearns}}, MIT Press, 1995.

\bibitem{BCD97}
M.~Bardi and I.~Capuzzo-Dolcetta, \emph{{O}ptimal {C}ontrol and {V}iscosity
  {S}olutions of {H}amilton-{J}acobi-{B}ellman equations}, Birkh\"{a}user,
  1997.

\bibitem{cardanotes}
P.~Cardaliaguet, \emph{Introduction to differential games. \emph{{L}ecture
  Notes, {U}niversit\'e de {B}retagne {O}ccidentale}, 2010.}

\bibitem{carda07}
\bysame, \emph{Differential games with asymmetric information}, SIAM journal on
  Control and Optimization \textbf{46} (2007), 816--838.

\bibitem{carda09b}
\bysame, \emph{A double obstacle problem arising in differential game theory},
  Journal of Mathematical Analysis and Applications \textbf{360} (2009),
  95--107.

\bibitem{carda09a}
\bysame, \emph{Numerical approximation and optimal strategies for differential
  games with lack of information on one side}, Advances in Dynamic Games and
  their Applications \textbf{10} (2009), 159--176.

\bibitem{CLS11}
P.~Cardaliaguet, R.~Laraki, and S.~Sorin, \emph{A continuous time approach for
  the asymptotic value in two-person zero-sum repeated games}, SIAM Journal on
  Control and Optimization \textbf{50} (2012), 1573--1596.

\bibitem{CQ08}
P.~Cardaliaguet and M.~Quincampoix, \emph{Deterministic differential games
  under probability knowledge of initial condition}, International Game Theory
  Review \textbf{10} (2008), 1--16.

\bibitem{CR09}
P.~Cardaliaguet and C.~Rainer, \emph{On a continuous-time game with incomplete
  information}, Mathematics of Operations Research \textbf{34} (2009),
  769--794.

\bibitem{CR09b}
\bysame, \emph{Stochastic differential games with asymmetric information},
  Applied Mathematics \& Optimization \textbf{59} (2009), 1--36.

\bibitem{CS10}
P.~Cardaliaguet and A.~Souqui{\`e}re, \emph{A differential game with a blind
  player}, SIAM Journal on Control and Optimization \textbf{50} (2012),
  2090--2116.

\bibitem{CL83}
M.G. Crandall and P.L. Lions, \emph{Viscosity solutions of {H}amilton-{J}acobi
  equations.}, Trans. Amer. Math. Soc. \textbf{277} (1983), 1--42.

\bibitem{dm96}
B.~De~Meyer, \emph{Repeated games and partial differential equations},
  Mathematics of Operations Research \textbf{21} (1996), 209--236.

\bibitem{ES84}
L.C. Evans and P.E. Souganidis, \emph{Differential games and representation
  formulas for solutions of {H}amilton-{J}acobi {E}quations}, Indiana Univ.
  Math. J. \textbf{282} (1984), 487--502.

\bibitem{GOB13}
F.~Gensbittel and M.~Oliu-Barton, \emph{Optimal strategies in repeated games
  with incomplete information}, Working paper (2013).

\bibitem{heuer92}
M.~Heuer, \emph{Asymptotically optimal strategies in repeated games with
  incomplete information}, International Journal of Game Theory \textbf{20}
  (1992), 377--392.

\bibitem{MZ71}
J.-F. Mertens and S.~Zamir, \emph{The value of two-person zero-sum repeated
  games with lack of information on both sides}, International Journal of Game
  Theory \textbf{1} (1971), 39--64.

\bibitem{OB13}
M.~Oliu-Barton, \emph{Dynamic games with incomplete information in discrete and
  continuous time}, PhD thesis, \emph{Universit\'e Pierre et Marie Curie}
  (2013).

\bibitem{rockafellar97}
R.T. Rockafellar, \emph{Convex {A}nalysis}, Princeton University Press, 1997.

\bibitem{sorin02}
S.~Sorin, \emph{A {F}irst {C}ourse on {Z}ero-{S}um {R}epeated {G}ames},
  Springer, 2002.

\bibitem{SZ85}
S.~Sorin and S.~Zamir, \emph{A 2-person game with lack of information on 1 and
  1/2 sides}, Mathematics of {O}perations {R}esearch \textbf{10} (1985),
  17--23.

\bibitem{souquiere10b}
A.~Souqui{\`e}re, \emph{Approximation and representation of the value for some
  differential games with imperfect information}, International Journal of Game
  Theory \textbf{39} (2010), 699--722.

\bibitem{souquiere10a}
\bysame, \emph{Jeux diff\'erentiels \`a information imparfaite}, PhD thesis,
  \emph{Universit\'e de Bretagne Occidentale} (2010).

\end{thebibliography}

\end{document}